\def \ind{_{n \in {\mbox{\rm {\scriptsize I$\!$N}}}}}
\newcommand{\GR}{{\mathbb R}}
\newcommand{\GZ}{{\mathbb Z}}
\newcommand{\GC}{{\mathbb C}}
\newcommand{\GN}{{\mathbb N}}
\newcommand{\ab}{|}
\newtheorem{theorem}{Theorem}
\newtheorem{lemma}[theorem]{Lemma}
\newtheorem{proposition}[theorem]{Proposition}
\newtheorem{question}{Question} 
\newtheorem{definition}{Definition}
\newtheorem{remark}{Remark}
\begin{document}

\title[Rigidity]{Rigidity of square-tiled interval exchange transformations}

\author[S. Ferenczi]{S\'ebastien Ferenczi} 
\address{Aix Marseille Universit\'e, CNRS, Centrale Marseille, Institut de Math\' ematiques de Marseille, I2M - UMR 7373\\13453 Marseille, France.}
\email{ssferenczi@gmail.com}
\author[P. Hubert]{Pascal Hubert} 
\address{Aix Marseille Universit\'e, CNRS, Centrale Marseille, Institut de Math\' ematiques de Marseille, I2M - UMR 7373\\13453 Marseille, France.}
\email{hubert.pascal@gmail.com}

\subjclass[2010]{Primary 37E05; Secondary 37B10}
\date{February 17, 2017}

\begin{abstract}
We look at interval exchange transformations defined as first return maps on the set of diagonals of a flow of direction $\theta$ on a square-tiled surface: using a combinatorial approach, we show that, when the surface has at least one true singularity  both the flow and the interval exchange  are  rigid if and only if  $\tan\theta$ has bounded partial quotients. Moreover, if all vertices of the squares are singularities of the flat metric, and $\tan\theta$ has bounded partial quotients,  the square-tiled
 interval exchange transformation $T$ is not of rank one. Finally, for another class of surfaces, those defined by the unfolding of billiards in Veech triangles,  we build an uncountable set of rigid directional flows and an uncountable set of rigid interval exchange transformations.

\end{abstract}
\maketitle

\begin{center}
{\it To the memory of William Veech whose mathematics were a constant source of inspiration for both authors, and who always showed great kindness to the members of the Marseille school, beginning with its founder G\'erard Rauzy.}
\end{center}

\vskip 1cm

Interval exchange transformations were originally
introduced by Oseledec
\cite{ose}, following an idea of Arnold \cite{arn}, see also Katok and Stepin \cite{ks}; an
exchange
of $k$ intervals, denoted throughout this paper by $\mathcal I$, is given by a positive vector of $k$ lengths together with a
permutation $\pi$ on $k$ letters; the unit interval is partitioned
into $k$ subintervals of lengths $\alpha _1,\ldots ,\alpha_k$ which are
rearranged by $\mathcal I$ according to $\pi$.

 The history of interval exchange transformations is made with big {\em generic} results: almost every interval 
 exchange transformation is uniquely ergodic (Veech \cite{vue}, Masur \cite{mas}), almost every interval exchange transformation
 is weakly mixing (Avila-Forni \cite{af}), while other results like simplicity \cite{vs} or Sarnak's conjecture \cite{sar} are still
 in the future.  In parallel with generic results, people have worked to build
 constructive examples, and, more interesting and more difficult, counter-examples. In the present paper
 we want to focus on two less-known but very important generic results, both by Veech: almost every interval exchange transformation 
 is {\em rigid} \cite{vs}, almost every interval exchange transformation
 is of  {\em rank one} \cite{vr1}. 
 
 These results are not true for every interval exchange transformation.
 The last result admits already a wide collection of examples and counter-examples, as indeed 
 the 
 first two papers ever written on interval exchange transformations provide counter-examples to a weaker property (Oseledec \cite{ose}) and
 examples of 
 a stronger property (Katok-Stepin \cite{ks}); in more recent times, many examples were built, such 
 as most of those in \cite{fz2} \cite{fie}, and also a surprisingly vast amount of counter-examples, as, 
 following Oseledec, 
 many great minds built interval exchange transformations with given spectral multiplicity functions, for example Robinson \cite{robi} or Ageev \cite{ag} and
 this contradicts rank one as soon as the latter 
 is not constantly one (simple spectrum); let us just remark that these brilliant examples, built on purpose,
 are a little complicated and not very explicit as interval exchange transformations. We know of only one family of interval exchange transformations which
 have simple spectrum but not rank one, these were built in \cite{bcf} but only for $3$ intervals.
 
 As for the question of rigidity, it has been solved completely  for the case
 of $3$-interval exchange transformations in \cite{fhz4}, where a necessary and sufficient condition 
 is given. For more than three intervals, examples of rigidity can again be found in \cite{fz2} \cite{fie}.\\
  
  But of course, possibly the main appeal of interval exchange transformations is the fact that they are closely linked to linear flows on translation surfaces, which are studied using Teichm\"uller dynamics. 
 Generic results are obtained applying the  $SL(2,\GR)$ action on translation surfaces.
 After all the efforts made to classify $SL(2,\GR)$ orbit closures in the moduli spaces of abelian differentials, especially after the work of Eskin, Mirzakhani and Mohammadi \cite{esmi0, esmi}, it is quite natural to want to solve these ergodic questions on suborbifolds of moduli spaces. The celebrated Kerckhoff-Masur-Smillie Theorem \cite{kms} solved the unique ergodicity question for every translation surface and almost every direction. Except for this general result, very little is known on the ergodic properties of linear flows and interval exchange transformations obtained from suborbifolds. Avila and Delecroix recently proved that, on a non arithmetic Veech surface, in  a generic direction, the linear flow is weakly mixing \cite{ad}.
 
 In the present paper, we shall study two families of Veech surfaces, the {\em square-tiled} surfaces, and the  surfaces built by unfolding {\em billiards in Veech triangles}.
 
  In Teichm\"uller dynamics, square-tiled surface play a special role since they are integers points in period coordinates. Moreover, the $SL(2,\GR)$ orbit of a square-tiled surface is closed in its moduli space. The main part of the present paper studies families of interval exchange transformations associated with square-tiled surfaces.
 Our main results are:
 \begin{theorem} \label{thm:flow}
 Let $X$ be a square-tiled surface of genus at least 2. 
 The linear flow in direction $\theta$ on $X$ is rigid if and only if the slope $\tan \theta$ has unbounded partial quotients. 
 \end{theorem}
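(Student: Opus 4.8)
The plan is to compare the flow on $X$ with the linear flow on the torus $\GT^2$ via the branched covering $\pi\colon X\to\GT^2$ underlying the square-tiled structure. Write $\alpha=\tan\theta$, which we take irrational, with convergents $p_n/q_n$ and partial quotients $a_n$. The covering $\pi$ is branched over a single point $O$, with monodromy generated by the two permutations $\sigma,\tau$ describing $X$; a square-tiled surface has genus at least $2$ precisely when $O$ is a genuine cone point, which happens precisely when $[\sigma,\tau]\neq\mathrm{id}$, so the loop around $O$ has nontrivial monodromy $g_O:=[\sigma,\tau]$. The flow $(\phi_t)$ on $X$ is the $\pi$-lift of the flow on $\GT^2$, and the torus flow tends to the identity in measure along exactly those $t$ with $\|t\cos\theta\|\to0$ and $\|t\sin\theta\|\to0$; so everything reduces to deciding whether, along some such sequence, the ``which-sheet'' monodromy cocycle $c_t(x)$ (a permutation of the sheets of $\pi$) also becomes the identity in measure.

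The heart of the matter is a quantitative description of $c_t$. For $t$ with $t\cos\theta$ close to an integer $q$ and $\|q\alpha\|$ comparable, up to bounded factors, to $\|q_n\alpha\|$ for the matching convergent (for badly approximable $\alpha$ every relevant $q$ is of this form, by Ostrowski's expansion), $c_t(x)$ is the monodromy of the free homotopy class in $\GT^2\setminus\{O\}$ of the almost-closed slope-$\alpha$ arc of length $t$ issued from $\pi(x)$: this class is one fixed $(q,p)$-curve $\gamma_0$ avoiding $O$ for all $x$ outside an exceptional set $E_t$, and on $E_t$ it picks up an extra conjugate of $\delta^{\pm1}$ (the loop around $O$). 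Since a slope-$\alpha$ arc of length $\asymp q_n$ stays within $\asymp 1/q_{n+1}$ of the periodic slope-$p_n/q_n$ direction, it can wind around $O$ only when $\pi(x)$ lies within $\asymp 1/q_{n+1}$ of the length-$\asymp q_n$ separatrix in that direction, so that $\mu(E_t)\asymp q_n/q_{n+1}\asymp 1/a_{n+1}$ while $\mu(E_t^{\,c})\asymp 1-q_n/q_{n+1}$. Proving this, and identifying $\gamma_0$ with an explicit power of the primitive $(q_n,p_n)$-curve, is the step I expect to be the main obstacle; it is done most cleanly in combinatorial terms, by realising $(\phi_t)$ as a suspension over the interval exchange on the diagonals of the squares and analysing one step of its Rauzy--Veech induction, whose combinatorial data are dictated by $a_{n+1}$.

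Granting this, the ``if'' direction is short. Assume $\tan\theta$ has unbounded partial quotients and fix a subsequence with $a_{n_j+1}\to\infty$. Let $G$ be the monodromy group of $\pi$ (replacing $\pi$ by its Galois closure if it is not regular, which is harmless since a factor of a rigid flow is rigid), and put $T_j:=|G|!\,q_{n_j}/\cos\theta$. Then $\|T_j\cos\theta\|=0$ and $\|T_j\sin\theta\|\le|G|!\,\|q_{n_j}\alpha\|\to0$; off $E_{T_j}$, whose measure is $\asymp 1/a_{n_j+1}\to0$, the monodromy is $\mu(\gamma_0)$ with $\gamma_0$ freely homotopic to the $|G|!$-th power of the primitive $(q_{n_j},p_{n_j})$-curve, hence trivial because every element of $G$ has order dividing $|G|!$. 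So $\phi_{T_j}\to\mathrm{Id}$ in measure, and the flow is rigid.

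For the ``only if'' direction one would assume all partial quotients bounded by $M$ and argue by contradiction. A rigidity sequence $t_k\to\infty$ must force, through $\pi$, $t_k\cos\theta\to q_k'\in\GZ$ with $\|q_k'\alpha\|\to0$, hence (by badly approximable-ness) $q_k'$ sitting at some level $n_k\to\infty$; then both $E_{t_k}$ and its complement have measure at least some $c(M)>0$. The two monodromy permutations carried by these two sets cannot both be the identity, since equating both to the identity and conjugating would force $g_O=\mathrm{id}$. Hence on one of the two sets, intersected with the positive-measure set of points lying over a sheet genuinely moved by the relevant permutation and over the middle of their square, $\phi_{t_k}(x)$ lands --- up to an error tending to $0$ --- on a \emph{different} lift of a point close to $\pi(x)$, so $d(\phi_{t_k}(x),x)$ stays bounded below on a set of measure bounded below. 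Thus no $t_k\to\infty$ works, and the flow is not rigid.
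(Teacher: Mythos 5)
Your ``if'' direction (unbounded partial quotients $\Rightarrow$ rigid) is essentially sound, and your covering-space point of view is a legitimate alternative to the paper's combinatorial treatment. The problem is in the ``only if'' direction, which is the real content of the theorem, and the gap is exactly at the point you flag as ``the main obstacle'' --- but it is worse than a deferred lemma: the statement you propose to prove there is false for the times you actually need to handle. Your parenthetical reduction (``for badly approximable $\alpha$ every relevant $q$ is of this form, by Ostrowski's expansion'') does not hold. A rigidity sequence $t_k$ for the flow only forces $\|t_k\cos\theta\|\to 0$ and $\|t_k\sin\theta\|\to 0$, i.e.\ an integer $q$ with $\|q\alpha\|\to 0$; it does not force $q\,\|q\alpha\|$ to stay bounded, nor $\|q\alpha\|$ to be comparable to $\|q_m\alpha\|$ for the convergent $q_m\le q<q_{m+1}$. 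For instance (already for the golden ratio) $q=q_n+q_{n+2k}$ has $\|q\alpha\|\asymp\|q_n\alpha\|\to 0$ while $q\,\|q\alpha\|\asymp q_{n+2k}/q_{n+1}\to\infty$. For such $q$ the slope-$\alpha$ segment of horizontal extent $q$ is \emph{not} trapped in the annulus complementary to a closed $(q,p)$-geodesic through $O$: its transverse drift $\|q\alpha\|$ is much larger than the annulus width $\asymp 1/q$, the set of starting points whose segment sweeps past $O$ has measure $\asymp q\,\|q\alpha\|$, which is not small (indeed can exceed $1$), and the winding number around $O$, hence the monodromy permutation, takes many values rather than your two. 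So the dichotomy ``$\gamma_0$ off $E_t$, one extra conjugate of $\delta^{\pm 1}$ on $E_t$, with $\mu(E_t)\asymp 1/a_{n+1}$ and $\mu(E_t^c)\ge c(M)$'' breaks down, and with it the step ``the two permutations cannot both be the identity''. Since non-rigidity must exclude \emph{every} sequence of times, ruling out only times close to multiples of convergent denominators proves nothing.

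Showing that, for arbitrary $q$ with $\|q\alpha\|\to 0$, the set where the sheet-permutation is trivial cannot have measure tending to $1$ is essentially equivalent to the theorem, and it is precisely what the paper's machinery is built for: the flow is reduced to the square-tiled interval exchange $T_\alpha$ via a constant-roof suspension, and Proposition \ref{lmr} shows that if the $\bar d$-distance between a long word and its shift by $q$ is small \emph{simultaneously for the $d$ homologous codings}, then the words must agree on a connected central block of definite proportion, which forces a periodic point and a contradiction. Note that the single-orbit version of this statement is false (Remark \ref{ctex}), which is a warning that the sheet-interaction you would need to control in the covering picture is genuinely delicate; your sketch does not engage with it. To repair your approach you would need a quantitative statement about the permutation cocycle over the rotation valid for all $q$ with $\|q\alpha\|$ small (not just convergents), e.g.\ via the Ostrowski decomposition of $q$ into blocks and a non-degeneracy argument for the resulting products of permutations --- at which point you are reproving Proposition \ref{lmr} in different language.
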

 
 \begin{remark} The new and more difficult statement in Theorem \ref{thm:flow} is the non rigidity phenomenon when the slope has bounded partial quotients. 
 \end{remark}
 
 Theorem \ref{thm:flow} can be restated in terms of interval exchange transformations. Given a square-tiled surface and a direction with positive slope $\tan \theta$,  defining
 $\alpha = \frac{1}{ 1 + \tan \theta}$,
 there is very natural way to associate an interval exchange transformation $T_{\alpha}$, namely the first return map on the union of the diagonals of slope $-1$  of the squares (the length of diagonals is normalized to be 1). It is a finite extension of a rotation of angle $\alpha$, and an interval exchange transformation on a multi-interval. We call it a {\em square-tiled interval exchange transformation}.  
 \begin{theorem} \label{thm:main}
 Let $X$ be a square-tiled surface of genus at least 2. 
 The square-tiled interval exchange transformation $T_\alpha$ is rigid if and only if $\alpha$ has unbounded partial quotients\footnote{$\alpha$ has bounded partial quotients if and only if  $\tan \theta$ has bounded partial quotients}. 
 \end{theorem}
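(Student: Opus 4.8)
The plan is to present $T_\alpha$ as a finite skew-product extension of the circle rotation $R_\alpha\colon x\mapsto x+\alpha$ on $\GT=\GR/\GZ$, and to translate rigidity into a statement about the associated cocycle. Realizing the square-tiled surface $X$ as a connected degree-$N$ cover of the standard flat torus branched only over the image of the vertices, the direction-$\theta$ linear flow on $X$ covers the linear flow on the torus, and its first return map to the union of the $N$ diagonals (each normalized to length $1$, their union identified with $\GT\times\{1,\dots,N\}$) has the form $T_\alpha(x,i)=(x+\alpha,\ \phi(x)\cdot i)$, where $\phi\colon\GT\to\Gamma$ is piecewise constant with finitely many discontinuities (the return time being bounded) and $\Gamma\le S_N$ is the monodromy group, acting transitively on the sheets since $X$ is connected. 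Because $X$ has a true singularity---equivalently, by Riemann--Hurwitz, genus at least $2$---the cover is genuinely branched and $\phi$ is not a measurable coboundary: were it one, $T_\alpha$ would be measurably isomorphic to $R_\alpha\times\mathrm{Id}_\Gamma$ and rigid for every $\alpha$. Writing $\phi^{(n)}(x)=\phi(x+(n-1)\alpha)\cdots\phi(x+\alpha)\phi(x)$, and using that $R_\alpha$ is a factor of $T_\alpha$ while the invariant measure is the normalized product of Lebesgue measure on $\GT$ with the uniform measure on the sheets, one checks in the standard way that $T_\alpha$ is rigid if and only if there is a sequence $n_k\to\infty$ with $\|n_k\alpha\|\to0$ (here $\|\cdot\|$ is the distance to the nearest integer) and $\phi^{(n_k)}\to 1_\Gamma$ in measure.

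For the implication ``$\alpha$ has unbounded partial quotients $\Rightarrow T_\alpha$ rigid'' I would argue directly. Let $(q_k)$ be the denominators of the continued fraction of $\alpha$ and, using unboundedness, choose a subsequence with next partial quotient $a_{k_j+1}\to\infty$, so $\|q_{k_j}\alpha\|<1/q_{k_j+1}=o(1/q_{k_j})$. The map $\phi^{(q_{k_j})}$ is constant on each atom of the partition of $\GT$ generated by the $q_{k_j}$ translates $\{\beta-\ell\alpha:0\le\ell<q_{k_j}\}$ of the finitely many discontinuities $\beta$ of $\phi$, and there are only $O(q_{k_j})$ such atoms, each an interval. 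Let $m$ be the exponent of $\Gamma$. If $x,x+q_{k_j}\alpha,\dots,x+(m-1)q_{k_j}\alpha$ all lie in one atom---which fails only on a set of measure $O(m\,q_{k_j}\,\|q_{k_j}\alpha\|)=O(m/a_{k_j+1})\to0$---then by the cocycle identity $\phi^{(mq_{k_j})}(x)=(\phi^{(q_{k_j})}(x))^m=1_\Gamma$. Since also $\|mq_{k_j}\alpha\|\le m/q_{k_j+1}\to0$, we get $T_\alpha^{mq_{k_j}}\to\mathrm{Id}$, so $T_\alpha$ is rigid. (The same mechanism---a bounded piece of orbit stays inside one column of the tower, after which the cocycle is killed by passing to its $m$-th power---also gives the corresponding half of Theorem~\ref{thm:flow}.)

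The substance of the theorem is the converse: if $\alpha$ has bounded partial quotients then $T_\alpha$ is \emph{not} rigid. Suppose, for contradiction, $n_k\to\infty$ with $\|n_k\alpha\|\to0$ and $\phi^{(n_k)}\to 1_\Gamma$ in measure. With the partial quotients bounded by $a$, the Ostrowski expansion $n_k=\sum_i c_i^{(k)}q_i$ ($0\le c_i^{(k)}\le a$) has its lowest nonzero index tending to infinity, and the cocycle identity writes $\phi^{(n_k)}$ as a bounded-length product of factors, each a product of at most $a$ ``consecutive copies'' of some $\phi^{(q_i)}$ with $i$ large. By the Denjoy--Koksma inequality---applied to $\phi$ through an abelian quotient of $\Gamma$, with the non-abelian case absorbed into the tower analysis below---each such $\phi^{(q_i)}$ takes only boundedly many values. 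The heart of the matter is that these factors, hence $\phi^{(n_k)}$, do \emph{not} concentrate at $1_\Gamma$, and this is where bounded type and the true singularity are both indispensable. The Denjoy--Koksma defect of $\phi^{(q_i)}$ is governed by the fractional parts $\{q_i\bar\phi\}$, with $\bar\phi$ the mean monodromy per return; a geometric computation identifies the relevant component of $\bar\phi$ with a nonzero rational multiple of $\tan\theta$ modulo $\GQ$, and since $\alpha=1/(1+\tan\theta)$ the denominators $q_i$ of the convergents of $\alpha$ are not good approximants of $\tan\theta$, so $\{q_i\bar\phi\}$ does not tend to $0$. For bounded type this forces the defect to persist over at least two values, each of proportion bounded below (rather than collapsing onto a single value, which is what happens for a coboundary and would make $T_\alpha$ rigid), so $\phi^{(q_i)}$ remains a definite distance from $1_\Gamma$ in measure; a blockwise version of this estimate along the Ostrowski expansion then does the same for $\phi^{(n_k)}$, contradicting the assumption. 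I expect this Diophantine input---tying the continued fraction of $\alpha$ to the cohomology class of the monodromy cocycle, together with its propagation along the blocks---to be the main obstacle, and the place where the explicit combinatorial description of the Rokhlin towers of square-tiled interval exchange transformations carries the weight.

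Finally, I would note that Theorems~\ref{thm:flow} and \ref{thm:main} are two faces of the same tower analysis: the direction-$\theta$ flow is the special flow over $(X,T_\alpha)$ under a bounded-variation roof, and the two equivalences share the same right-hand side. One might hope to read Theorem~\ref{thm:main} off Theorem~\ref{thm:flow}, but that roof is not cohomologous to a constant, and its Birkhoff-sum defect (of size $\asymp\log n$ for bounded type) obstructs a naive transfer in either direction; hence I would carry out the argument for $T_\alpha$ directly as above, using Theorem~\ref{thm:flow} only as a guide.
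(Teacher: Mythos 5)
Your skew-product set-up is sound, and your proof of the easy direction (unbounded partial quotients $\Rightarrow$ rigidity) is correct and essentially parallel to the paper's Proposition \ref{rig2}: your exponent $m$ of the monodromy group plays exactly the role of the paper's $s_n\leq d^d$, the least common multiple of the cycle lengths of the words $P_{n,i}$, and the measure estimate $O(q\,m\,\|q\alpha\|)$ on the bad set is the same mechanism in cocycle language. The rigidity criterion you use ($\|n_k\alpha\|\to 0$ and $\phi^{(n_k)}\to 1_\Gamma$ in measure) is also fine, since $\Gamma\leq S_N$ acts faithfully on the sheets and the rotation is a factor.

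The hard direction -- bounded partial quotients $\Rightarrow$ non-rigidity, which is the actual content of the theorem -- is not proved. Your argument rests on a ``mean monodromy per return'' $\bar\phi$ whose ``Denjoy--Koksma defect'' $\{q_i\bar\phi\}$ is supposed to stay away from $0$ and to be identified with a rational multiple of $\tan\theta$. For a cocycle with values in a finite permutation group there is no such real-valued mean controlling $\phi^{(q_i)}$: Denjoy--Koksma only speaks to the abelianized count of $l$'s and $r$'s along the orbit, and that abelian obstruction can perfectly well vanish (for instance when the group generated by $p_l,p_r$ has trivial abelianization, or whenever the counts happen to kill the orders of the abelianized generators), so it cannot force $\phi^{(q_i)}$, let alone the Ostrowski products $\phi^{(n_k)}$, to stay a definite distance from $1_\Gamma$. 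Moreover your Diophantine claim that the denominators of $\alpha=1/(1+\tan\theta)$ are ``not good approximants of $\tan\theta$'' is unsubstantiated ($\alpha\mapsto(1-\alpha)/\alpha$ is a M\"obius map, not a rational translation), though this is secondary. The genuine difficulty is the non-commutativity of $p_l$ and $p_r$, and the fact that a single orbit does not detect it: Remark \ref{ctex} of the paper exhibits exactly the failure of the one-orbit concentration estimate you are implicitly invoking. The paper's proof gets around this by a symbolic argument carried out on $d$ homologous words simultaneously: Proposition \ref{lmr} (which requires $e\geq 1+\#\{i;\,p_lp_ri\neq p_rp_li\}$) shows that a small \emph{summed} $\bar d$-distance over all $d$ lifts of a Sturmian word forces agreement on a long connected central block, and then rigidity plus the ergodic theorem (applied to all $d$ points $x^i$ over a given rotation orbit) would force a periodic point, contradicting Proposition \ref{min}. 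Nothing in your sketch supplies a substitute for this multi-orbit mechanism; ``the non-abelian case absorbed into the tower analysis'' is precisely the missing step.
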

 
 \begin{remark}
 To our knowledge, these examples are the first appearance of non rigid interval exchange transformations on more than 3 intervals, together with the examples defined simultaneously by Robertson \cite{robe}, where a different class of interval exchanges is shown to have the stronger property of {\em mild mixing} (no rigid factor). Our examples are not weakly mixing, and therefore not mildly mixing. Note also that Franczek \cite{fra}
 proved that mildly mixing  flows are dense in genus at least two, and that Kanigowski and Lema\'nczyk \cite{kale} proved that mild mixing is implied by {\em Ratner's property}, which thus our examples do not possess.
 \end{remark}
 
 \begin{question} Do there exist interval exchanges which are weakly mixing, not rigid but not mildly mixing? \end{question}

\begin{question} Find interval exchanges satisfying Ratner's property (note that the examples of Robertson are likely candidates). \end{question}

\begin{question}(Forni) Is a  {\em self-induced} interval exchange always non-rigid  when the permutation is not circular? \end{question}

 The above theorem, again in the direction of non rigidity, constitutes the main result of the paper; its proof relies on the word combinatorics of the natural coding of the interval exchange. Indeed, rigidity of a symbolic system translates, through the ergodic theorem, into  a form of approximate periodicity on the words: the iterates by some sequence $q_n$ of a very long word $x=x_1\ldots x_k$ should be words arbitrarily close to $x$ in the Hamming distance $\bar d$; to deny this property, the known methods consist either in showing that there are many possible $T^{q_n}x$ (thus for example {\em strong mixing} contradicts rigidity), but this will not be the case here, or else, as was initiated by Lema\' nczyk and Mentzen  \cite{leme}, in showing that $\bar d$-neighbours are scarce, and thus our appronximate periodicity forces periodicity, which is then easy to disprove.
But the examples of \cite{leme}, including some well-known systems like the Thue - Morse subshift (del Junco \cite{de}), satisfy a strong property on the scarcity of $\bar d$-neighbours, namely Proposition \ref{lmr} below with $e=1$ (two close enough neighbours must actually coincide on a connected central part); this property, which is shared also by Chacon's map, is {\em not} satisfied in general by our interval exchanges, see Remark \ref{ctex} below; they do satisfy a weaker property, namely Proposition \ref{lmr} under its general form, involving averages on a finite number of orbits,  which seems completely new and is sufficient to complete the proof of non-rigidity.

The stronger property on the scarcity of $\bar d$-neighbours is satisfied in some particular cases, and we use it to prove

 \begin{theorem}\label{thm:rank}
 If all vertices of the squares are singularities of the flat metric, and $\alpha$ has bounded partial quotients,  the square-tiled
 interval exchange transformation $T_\alpha$ is not rank one.
 \end{theorem}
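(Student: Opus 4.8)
The plan is to argue by contradiction: assuming that $T_\alpha$ is rank one, I would produce two long factors of its natural coding that are close in the Hamming metric $\bar d$ yet fail to coincide on a long central block, contradicting Proposition \ref{lmr} in the strong form $e=1$ --- which is exactly the form that becomes available once every vertex of the squares is a singularity.

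First I would unwind the rank-one hypothesis. For each $\varepsilon>0$ it furnishes a Rokhlin tower of some height $h$, as large as we please, with base $F$ lying in a single atom of the coding partition (the partition into diagonals), whose levels $F,T_\alpha F,\dots,T_\alpha^{h-1}F$ are pairwise disjoint and cover a set of measure $>1-\varepsilon$; the word $B=b_0\cdots b_{h-1}$ read down the column is its name. Since $\alpha$ has bounded partial quotients there is $k$ with $q_k\le h<q_{k+1}\le M q_k$, $M=M(\alpha)$, so $h$ is comparable to a continued-fraction denominator rather than lying in a huge gap, which is where bounded partial quotients come in. The coding of $T_\alpha$ refines the Ostrowski coding of the rotation $R_\alpha$ of which $T_\alpha$ is a finite extension, say with factor map $\pi$ onto $(\GT,R_\alpha)$ and return cocycle $\phi$; after discarding a set of measure $O(\varepsilon)$ I would arrange $F$ so that $\pi(F)$ avoids the finitely many discontinuities of $\phi$ and is well placed relative to the rotation towers of heights $q_{k-1},q_k$.

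Now the main step: to read off, from the single-column property, a near-coincidence between $B$ and one of its rotation-translates. The fibre coordinate of level $i$ is obtained from that of $F$ by the iterated cocycle $\Phi_i=\phi(R_\alpha^{i-1}\pi(F))\circ\cdots\circ\phi(\pi(F))$; and for the tower to fill measure $>1-\varepsilon$ the base $\pi(F)$ must have Lebesgue measure larger than a fixed multiple of $1/h$ (namely $h^{-1}$ times the number of diagonals), so its translates $R_\alpha^i\pi(F)$ for $i<h$ overlap heavily, the overlaps being controlled by the three-distance structure at scale $q_k$. Exploiting this, I expect to produce two factors $w,w'$ of the language, of length comparable to $h$, that are $\bar d$-close (with error $O(\varepsilon)$) but such that $w$ and $w'$ do \emph{not} agree on any central block of relative length near $1$: the discrepancy lives in the fibre coordinate, because the iterated cocycle $\Phi_p$ over $p\asymp q_k$ rotation steps does not settle to a constant along such a block. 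This last fact is precisely the non-cancellation of return permutations established, under bounded partial quotients, in the proof of Theorem \ref{thm:main}, and it is where one uses that every passage near a singularity permutes the diagonals nontrivially. But Proposition \ref{lmr} with $e=1$ asserts that $\bar d$-close factors \emph{must} agree on such a central block; the contradiction shows $T_\alpha$ is not rank one.

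The step I expect to be the main obstacle is this manufacture of $w,w'$ from the abstract rank-one tower, at exactly the rotation scale $q_k$ at which Proposition \ref{lmr} applies, while simultaneously controlling the $\varepsilon$-error from the Rokhlin tower, the $O(1)$ combinatorics of cutting the column name against the rotation towers of heights $q_{k-1},q_k$, and the placement of $F$ relative to the discontinuities of $\phi$. One must also verify that the rigid conclusion of Proposition \ref{lmr} with $e=1$ --- rather than the averaged version that suffices for the non-rigidity statement of Theorem \ref{thm:main} --- is both genuinely needed (rank one being a far weaker hypothesis than rigidity along a sequence, so that the cruder argument only forces a constant on a small part of the space and cannot be pushed through) and genuinely available under the all-vertices-singular assumption; establishing that availability is a separate combinatorial task about the return words of this subclass of square-tiled interval exchanges.
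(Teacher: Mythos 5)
There is a genuine gap, and it sits exactly where you flag ``the main obstacle''. Your plan is to derive a contradiction with Proposition \ref{lmr} itself, by manufacturing from the Rokhlin column two long factors $w,w'$ of the natural coding that are $\bar d$-close yet share no long central block. But Proposition \ref{lmr} (with $e=1$, available here because every vertex is singular) is an unconditional statement about all pairs of words of the language when $\alpha$ has bounded partial quotients: no such pair $w,w'$ exists in $L(T_\alpha)$, so your contradiction can only come from showing that rank one forces the existence of words that provably cannot occur --- and the mechanism you propose does not do this. You want the discrepancy to ``live in the fibre coordinate'', i.e.\ to come from the iterated cocycle over the rotation; but by Lemma \ref{hom} two factors with the same $\phi$-projection are either equal or completely different ($\bar d=1$), so a fibre discrepancy can never manifest itself as a small $\bar d$-difference, and if the $\phi$-projections differ then Proposition \ref{lmr} applies and the central block is there. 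Moreover the ``non-cancellation of return permutations established in the proof of Theorem \ref{thm:main}'' is not a statement proved in that proof: what is used there is that coincidence on a central block simultaneously for the $d$ homologous orbits would force a periodic point; nothing of that form gives you the words you need here, where rank one is assumed for a single orbit structure.

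The paper's proof uses Proposition \ref{lmr} with $e=1$ in the opposite, positive direction, and then needs one more arithmetic ingredient that your proposal never supplies. Starting from the non-constructive symbolic characterization of rank one (prefixes of most trajectories are $\delta_1B_1\cdots\delta_pB_p\delta_{p+1}$ with small spacers and $\bar d(B_i,B)<\epsilon$), Proposition \ref{lmr} with $e=1$ upgrades each approximate copy $B_i$ to an exact copy of a slightly trimmed $B$. Projecting by $\phi$, most of the Sturmian trajectories of the rotation $R$ are then covered by disjoint exact occurrences of a single long word; in the terminology of Chekhova \cite{che} this says the covering number $F$ of the rotation equals $1$, and her Proposition 5 rules this out precisely when $\alpha$ has bounded partial quotients. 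That last step is where bounded partial quotients really bites; in your proposal the bpq hypothesis only enters through the constant of Proposition \ref{lmr} and the comparison $q_k\le h<q_{k+1}\le Mq_k$, neither of which yields a contradiction, so even after repairing the fibre-coordinate argument you would still be missing the property of bpq rotations (scarcity of exact repetitive covering, or an equivalent) that the exact structure violates.
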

 
 \begin{remark}
 This condition is very restrictive and only holds for a finite number of square-tiled surfaces in each stratum.
 \end{remark}

In the last part,  we exhibit an uncountable set of rigid directional flows (see Proposition \ref{prop-flow-billiard}) and an uncountable  set of rigid interval exchange transformations (see  Proposition  \ref{prop:interval exchange-billiard}) associated with the unfolding of billiards in Veech triangles; in these examples, the directions are well approximated by periodic ones.

 \begin{remark} 
The proof of Proposition \ref{prop-flow-billiard} works mutatis mutandis for every Veech surface. 
\end{remark}

\begin{question}
On a primitive Veech surface, is the translation flow in a typical direction rigid?
\end{question}

 \subsection{Organization of the paper}
 In Section \ref{sec:def} we recall the classical definitions about interval exchange transformations, coding, square tiled surfaces and some facts in ergodic theory. Section \ref{sec:square-tiled-interval exchange} presents square tiled interval exchange transformations and their symbolic coding. 
In Section \ref{sec:proof}, we give a proof of Theorem \ref{thm:main} using combinatorial methods; the main tool is Proposition \ref{lmr}. In Section \ref{sec:consequences}, we deduce from Theorem \ref{thm:main} a proof of Theorem \ref{thm:flow}. We also prove Theorem \ref{thm:rank}.
In Section \ref{sec:reg-billiards}, we tackle the case of billiards in Veech triangles.

\section{Definitions} \label{sec:def}
\subsection{Interval exchange transformations}
For any question about interval exchange transformations, we refer the reader to the surveys \cite{via} \cite{yoc}. Our intervals are always semi-open, as $[a,b[$.

\begin{definition}\label{interval exchange} A {\em $k$-interval exchange transformation}
 $T$
with vector $(\alpha _1,\alpha _2,\ldots ,\alpha _k)$,
 and
permutation $\pi$ is
defined on $[0,\alpha_1+\ldots \alpha_k[$ by
$$
{\mathcal I}x=x+\sum_{\pi^{-1}(j)<\pi^{-1}(i)}\alpha_{j}-\sum_{j<i}\alpha_{j}.
$$
when $x$ is in the interval
$$\left[ \sum_{j<i}\alpha_{j}
,\sum_{j\leq i}\alpha_{j}\right[.$$

\noindent We put  
$\gamma_{i}=\sum_{j\leq i}\alpha_{j}$, and denote by  $\Delta_i$  
 the interval $[\gamma_{i-1}, \gamma_{i}[$ if $2\leq i\leq k-1$, 
 while $\Delta_1=[0, \gamma_{1}[$ and $\Delta_k=[\gamma_{k-1}, 1[$.
 \end{definition}

\subsection{Word combinatorics}
We look at finite {\em words} on a finite alphabet ${\mathcal A}=\{1,...k\}$. A word $w_1...w_t$ has
{\em length} $\ab w\ab=t$ (not to be confused with the length of a corresponding interval). The {\em  empty word} is  the unique word of length $0$.  The {\em concatenation} of two words $w$ and $w'$ is denoted by $ww'$. 

\begin{definition}\label{dln} 
\noindent A word $w=w_1...w_t$ {\em occurs at place $i$} in a word $v=v_1...v_s$ or an infinite sequence  $v=v_1v_2...$ if $w_1=v_i$, ...$w_t=v_{i+t-1}$. We say that $w$ is a {\em factor} of $v$. The empty word is a factor of any $v$. {\em Prefixes} and {\em suffixes} are defined in the usual way.
\end{definition}

\begin{definition}
A {\em   language} $L$ over $\mathcal A$ is a set of words such if $w$ is in $L$, all its factors are in $L$,  $aw$ is in $L$ for at least one letter $a$ of $\mathcal A$, and $wb$ is in $L$ for at least one letter $b$ of $\mathcal A$.

\noindent A language $L$ is
{\em  minimal} if for each $w$ in $L$ there exists $n$ such that $w$
occurs in each word  of $L$ with $n$ letters.

\noindent The language $L(u)$ of an infinite sequence $u$ is the set of its finite factors.
\end{definition}

\begin{definition} For two words of equal length $w=w_1\ldots w_q$ and $w'=w'_1\ldots w'_q$, their ${\bar d}$-distance is 
${\bar d}(w,w')=\frac{1}{q}\#\{i; w_i\neq w'_i\}$.
\end{definition}

\begin{definition}\label{ad}
 A word $w$ is called {\em  right special}, resp. {\em
left
special} if there are at least two different letters $x$ such that $wx$, resp. $xw$, is in $L$. If $w$ is both right special and
left special, then $w$ is called {\em  bispecial}. \end{definition}

\subsection{Codings}

\begin{definition} The {\em symbolic dynamical system} associated to a language $L$ is the one-sided shift $S(x_0x_1x_2...)=x_1x_2...$ on the subset $X_L$ of ${\mathcal A}^{\GN}$ made with the infinite sequences such that for every $t<s$, $x_t...x_s$ is in $L$.

\noindent For a word $w=w_1...w_s$ in $L$, the {\em cylinder} $[w]$ is the set $\{x\in X_L; x_0=w_1, ... x_{s-1}=w_s\}$. 
\end{definition}

Note that the symbolic dynamical system $(X_L,S)$ is minimal (in the usual sense, every orbit is dense) if and only if the language $L$ is mimimal.

\begin{definition}\label{sy} For a system $(X,T)$ and a finite partition $Z=\{Z_1,\ldots Z_r\}$ of $X$, the {\em trajectory} of a point $x$ in $X$ is the infinite
sequence
$(x_{n})\ind$ defined by $x_{n}=i$ if ${T}^nx$ falls into
$Z_i$, $1\leq i\leq r$.

\noindent Then $L(Z,T)$ is the language made of all the finite factors of all the  trajectories, and $X_{L(Z,T)}$ is the {\em coding} of $X$ by $Z$. 

\noindent For an interval exchange transformation $T$, if we take for $Z$ the partition made by the intervals $\Delta_i$, $1\leq i\leq k$, we denote $L(Z,T)$ by $L(T)$ and call $X_{L(T)}$ {the \em natural coding} of $T$.
\end{definition}

\subsection{Measure-theoretic properties}
\begin{definition} $(X, T, \mu )$ is {\em rigid} if there exists a sequence
    $q_{n}\to\infty$  such that for
any measurable set $A$ 
 $\mu (T^{q_{n}}A\Delta A)\to 0.$ \end{definition} 

\begin{definition}\label{nm} In $(X,T)$, a Rokhlin {\em tower}  is a collection 
    of disjoint measurable sets called {\em levels} $F$, $TF$, \ldots, $T^{h-1}F$.  If $X$ is 
    equipped with a partition $P$ such that each level $T^{r}F$ is contained in
    one atom
$P_{w(r)}$, 
 the {\em name} of the 
tower is the word
 $w(0)\ldots w(h-1)$.\end{definition}

  \begin{definition}\label{dr1} A system $(X, T,\mu )$  is of {\em rank one} if there exists a sequence of Rokhlin towers such that the whole $\sigma$-algebra is generated by the partitions $\{F_n, TF_n, \ldots, T^{h_n-1}F_n, X\setminus.\cup_{j=0}^{h_n-1}T^jF_n\}$.
      \end{definition}

\subsection{Translation surfaces and square-tiled surfaces} \label{sec:square-tiled-surfaces}

A translation surface is an equivalence class of polygons in the plane with edge identifications: Each translation surface is a finite union of polygons in $\GC$, together with a choice of pairing of parallel sides of equal length.  Two such collections of polygons are considered to define the same translation surface if one can be cut into pieces along straight lines and these pieces can be translated and re-glued to form the other collection of polygons (see Zorich \cite{zo}, Wright \cite{wr} for surveys on translation surfaces). For every direction $\theta$, the linear flow in direction $\theta$ is well defined. The first return map to a transverse segment is an interval exchange. \\

Recall that closed regular geodesics on a flat surface appear in families of parallel closed geodesics. Such families cover a cylinder filled with parallel closed geodesic of equal length. Each boundary of such a cylinder contains a singularity of the flat metric. \\

A square-tiled surface is a finite collection of unit squares $\{1, \dots, d\}$, the left side of each square is glued by translation to the right side of another square. The top of a square is glued to the bottom of another square. A baby example is the flat torus $\GR^2/\GZ^2$. In fact every square-tiled surface is a covering of $\GR^2/\GZ^2$ ramified at most over the origin of the torus. 
A square-tiled surface is a translation surface, thus linear flows are well defined.
 Combinatorially, a square-tiled surface is defined by two permutations acting on the squares: $\tau$ encodes horizontal identifications, $\sigma$ is responsible for the vertical identifications. For $1 \leq i \leq d$,
 $\tau(i)$ is the square on the right of $i$ and $\sigma(i)$ is the square on top of $i$.
 The singularity of the flat metric are the projections of some vertices of the squares with angles $2k\pi$ with $k>1$. The number $k$ is explicit in terms of the permutations $\tau$ and $\sigma$. The lengths of the orbits  of the commutator $[\tau,\sigma](i)$  give the angles at the singularities. Consequently $\tau$ and $\sigma$  commute if and only if there is no singularity for the flat metric which means that the square-tiled surface is a torus.
Moreover  the surface is connected if and only if the group generated by $\tau$ and $\sigma$ acts transitively on $\{1, 2, \dots, d\}$.
 A very good introduction to square-tiled surfaces can be found in Zmiaikou \cite{zm}.

\begin{figure}[h!]\label{fig:square-tiled-surface}

\begin{center}
\begin{tikzpicture}[scale = 1.8]

\draw (0,0) rectangle(1,  1);\draw (1,0) rectangle(2,  1);\draw (0,1) rectangle(1,  2);

\draw (0.5,0.5) node{1};
\draw (1.5,0.5) node{2};
\draw (0.5,1.5) node{3};
\end{tikzpicture}
\caption{ Square-tiled surface with 3 squares \\
$\tau (1,2,3) = (2,1,3)$ and  $\sigma (1,2,3) = (3,2,1)$}
\end{center}

\end{figure}
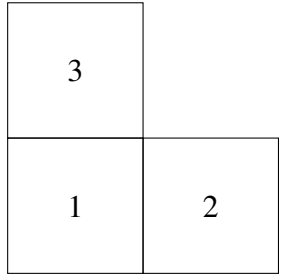

\section{Interval exchange transformation associated to square-tiled surfaces} \label{sec:square-tiled-interval exchange}
\subsection{Generalities}\label{sgen}

As we already noticed in the introduction, a square-tiled interval exchange transformation is the first return map on the diagonal of slope $-1$ of the linear flow on a square-tiled surface. Let  $p_l = \sigma^{-1}$ and $p_r = \tau^{-1}$, we first give a combinatorial definition of the square-tiled interval exchange transformation $T = T_{\alpha}$.

\begin{definition}\label{dsqiet} A {\em square-tiled $2d$-interval exchange transformation } with angle $\alpha$ and permutations  $p_l$ and $p_r$ is the exchange on $2d$ intervals defined by the positive vector $(1-\alpha,\alpha,1-\alpha,\alpha, \ldots,1-\alpha,\alpha)$ and permutation defined by $\pi (2i-1)=2p_l^{-1}(i)$, $\pi (2i)=2p_r^{-1}(i)-1$, $1\leq i\leq d$.\end{definition}

\begin{figure}[h] \label{fig:square-tiled-interval exchange}
\begin{center}
\begin{tikzpicture}[scale = 4]

\draw (0,0) rectangle(1,  1);\draw (1,0) rectangle(2,  1);\draw (0,1) rectangle(1,  2);


\draw(1,0)--(0,1);

\draw(2,0)--(0,2);

\draw[dashed](1,2)--(0.3,1.7);

\draw[dashed](2,1)--(1.3,0.7);

\draw[dashed](1,1)--(0.3,0.7);

\draw (0.3,0.7) node[above]{$1_l$};
\draw (0.8,0.2) node[above]{$1_r$};

\draw (1.3,0.7) node[above]{$2_l$};
\draw (1.8,0.2) node[above]{$2_r$};

\draw (0.3,1.7) node[above]{$3_l$};
\draw (0.8,1.2) node[above]{$3_r$};

(

\draw[dashed, <->](0.95,-0.05)--(0.25,0.65);
\draw (0.4,0.4) node[below]{$\sqrt{2}\alpha$};

\end{tikzpicture}\\
\caption{ Square-tiled interval exchange associated to the surface with 3 squares \\
$p_r^{-1} (1,2,3) = (2,1,3)$ and  $p_l^{-1} (1,2,3) = (3,1,2)$}

\end{center}

\end{figure}
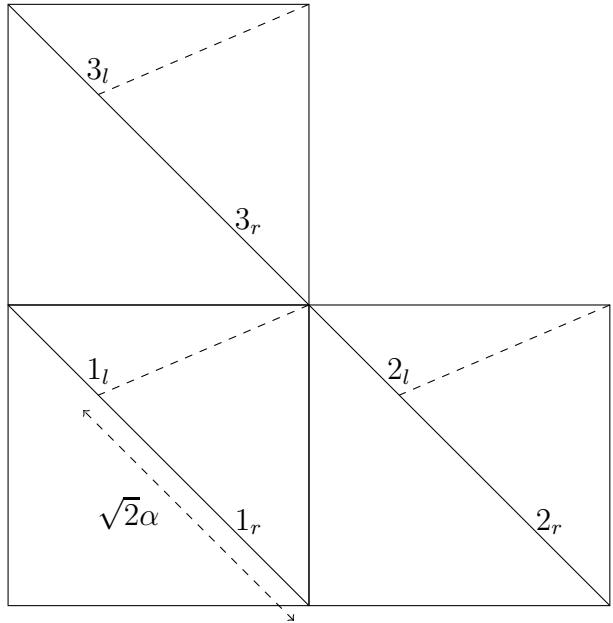

 Note that everything in this paper remains true if we replace the $\Delta_{2i-1}=[i-1, i-\alpha[$ by $[a_i, a_i+1-\alpha[$ and the $\Delta_{2i}=[i-\alpha,i[$ by $[a_i+1-\alpha, a_i+1[$ for some sequence satisfying $a_i\leq a_{i+1}-1$, and reorder the intervals in the same way. To avoid unnecessary complication, we shall always use $a_i=i-1$ as in the definition.\\
 
 Thus the discontinuities of $T$ are  some (not necessarily all, depending on the permutation) of the $\gamma_{2i-1}=i-\alpha$,  $1\leq i \leq d$, $\gamma_{2i}=i$, $1\leq i \leq d-1$, the discontinuities of $T^{-1}$ are  some of $\beta_{2i-1}=i-1+\alpha$,  $1\leq i \leq d$, $\beta_{2i}=i$, $1\leq i \leq d-1$.\\

We recall a classical result on minimality.

\begin{proposition} \label{min} Let $T$ be a square-tiled interval exchange transformation with irrational $\alpha$; $T$ is aperiodic, and is minimal if and only if there is no strict subset of $\{1\ldots d\}$ invariant by $p_l$ and $p_r$. 
\end{proposition}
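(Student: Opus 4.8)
The plan is to analyze the structure of $T$ as a skew product over an irrational rotation and reduce both aperiodicity and minimality to combinatorial properties of the permutations $p_l, p_r$. First I would observe that, since $\alpha$ is irrational, the projection of $T$ onto the "horizontal" coordinate (forgetting whether a point of $[i-1,i[$ lies in $\Delta_{2i-1}$ or $\Delta_{2i}$, and mapping $[i-1,i[$ to $[0,1[$) is exactly the rotation $R: x \mapsto x+\alpha \bmod 1$; this is immediate from Definition \ref{dsqiet}, because $\pi$ sends the pair $\{2i-1,2i\}$ covering $[i-1,i[$ onto the pair $\{2p_l^{-1}(i), 2p_r^{-1}(i)-1\}$, and on each piece $T$ is a translation acting on the fractional-in-$[0,1[$ coordinate by either $+\alpha$ (namely $x+1-\alpha \mapsto$ lands shifted by $-(1-\alpha) \equiv \alpha$) — I would spell out that in both cases the induced map on the circle coordinate is $R$. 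So $T$ is a $d$-point extension of $R$: the space is $[0,1[ \times \{1,\dots,d\}$, the base map is $R$, and the fiber over $x$ is permuted by $p_l$ or $p_r$ according to whether $x \in [0,1-\alpha[$ or $x \in [1-\alpha,1[$. Aperiodicity is then automatic: if $T^n = \mathrm{id}$ on a positive-measure set, projecting gives $R^n = \mathrm{id}$ on a positive-measure set, forcing $n\alpha \in \GZ$, impossible for irrational $\alpha$ and $n \neq 0$.

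Next I would prove the minimality criterion. For the easy direction, suppose $E \subsetneq \{1,\dots,d\}$ is nonempty and invariant under both $p_l$ and $p_r$. Then $[0,1[ \times E$ is a nonempty proper closed $T$-invariant set (invariance under $T$ follows because the fiber is moved by $p_l$ or $p_r$, both of which preserve $E$, and the base coordinate just rotates), so $T$ is not minimal. For the converse, suppose no such proper invariant subset exists, i.e. the group $\langle p_l, p_r \rangle$ acts transitively on $\{1,\dots,d\}$ — here I'd note this is the same transitivity condition that makes the square-tiled surface connected. Let $Y \subseteq [0,1[ \times \{1,\dots,d\}$ be a nonempty closed $T$-invariant set; I want $Y$ to be everything. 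The projection $\pi_1(Y) \subseteq [0,1[$ is a nonempty closed $R$-invariant set, and since $R$ is minimal (irrational rotation), $\overline{\pi_1(Y)} = [0,1[$; with a little care about the semi-open interval and the finitely many discontinuities I would argue $\pi_1(Y)$ is all of $[0,1[$ up to the countable orbit of endpoints, and then that $Y$ contains $[0,1[ \times F$ for $F = \{i : (x,i) \in Y \text{ for some } x\}$ — the point being that the fiber over a dense set of $x$'s is a fixed nonempty subset $F$ by a Fubini/ergodicity argument, or more elementarily because for each $i$ the set $\{x : (x,i)\in Y\}$ is closed and its image under the relevant branch maps matches up. Then $T$-invariance of $Y$ forces $p_l(F) = F$ and $p_r(F) = F$ (one must check both, using that both $[0,1-\alpha[$ and $[1-\alpha,1[$ are hit), so $F$ is invariant under $\langle p_l,p_r\rangle$; by transitivity $F = \{1,\dots,d\}$, hence $Y = [0,1[\times\{1,\dots,d\}$ and $T$ is minimal.

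The step I expect to be the main obstacle is the careful bookkeeping in the converse direction: because we are working with semi-open intervals and $T$ has genuine discontinuities, "closed invariant set projecting onto a dense set" does not instantly give "$Y$ contains a full horizontal slab." One has to handle the countable exceptional orbit of the rotation (the orbit of $0$ and of the discontinuity points) and argue that the fiber $F$ is genuinely constant in $x$ rather than merely constant off a null set — the cleanest route is probably to first establish unique ergodicity of $T$ with respect to Lebesgue$\times$counting via the skew-product structure (or cite that a minimal-base finite extension with transitive monodromy is uniquely ergodic), then recover topological minimality, but if we want a purely combinatorial/elementary argument we instead track orbits of the discontinuity points directly, in the spirit of Keane's condition for interval exchanges. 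I would present the elementary version: show that under the transitivity hypothesis, the $T$-orbit of any point is dense by using density of the $R$-orbit in the base together with the fact that repeatedly applying $p_l$ and $p_r$ (realized along the orbit, since both intervals $[0,1-\alpha[$ and $[1-\alpha,1[$ are visited infinitely often with the right frequencies) generates the full symmetric action on the fiber index needed to reach any $(y,j)$.
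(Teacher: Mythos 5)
Your reduction of $T$ to a $d$-point extension of the rotation $R$, the aperiodicity argument, and the easy direction (a proper $\langle p_l,p_r\rangle$-invariant set $E$ gives the proper closed invariant set $[0,1[\times E$) are all correct. The gap is exactly at the step you yourself flag: transitivity of $\langle p_l,p_r\rangle$ does not follow "softly" into minimality of the skew product. From a closed invariant $Y$ projecting onto the circle you only get the relation $F(R^nx)=\phi_{x,n}(F(x))$, where $\phi_{x,n}$ is the product of $p_l^{-1},p_r^{-1}$ dictated by the Sturmian itinerary of $x$; you cannot realize arbitrary words in $p_l,p_r$ along an orbit, so neither the constancy of the fiber set $F(x)$ nor its invariance under \emph{each} of $p_l$ and $p_r$ separately is automatic. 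Worse, the fallback you propose to cite -- "a finite extension of a minimal base with transitive monodromy is minimal/uniquely ergodic" -- is not a theorem: for $\GZ/2\GZ$-extensions of an irrational rotation by an interval-indicator cocycle (values generating the full group, hence transitive), minimality and ergodicity depend on the arithmetic relation between the interval's endpoints and $\alpha$, and they can fail when the cocycle is a (measurable or topological) coboundary. So the truth of the hard direction really uses the special geometric origin of this particular cocycle (the partition at $1-\alpha$ coming from the flat structure), and your sketch does not supply that input; the concluding sentence about "generating the full symmetric action on the fiber" is the assertion to be proved, not an argument.

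For comparison, the paper does not argue inside the skew product at all: it observes that the transitivity hypothesis is exactly connectedness of the square-tiled surface, invokes the Veech dichotomy (square-tiled surfaces are lattice surfaces, \cite{ve89}, and for them even the earlier criterion \cite{ve87} suffices), and notes that periodic directions on a square-tiled surface have rational slope; since $\alpha$ is irrational the direction is not periodic, hence the flow, and therefore $T$, is minimal (indeed uniquely ergodic). If you want to avoid the Veech machinery, a workable elementary route would be combinatorial -- e.g.\ lifting the Sturmian return words $M_n,P_n$ of Proposition \ref{sdr} to the $d$ sheets and proving a primitivity-type statement from transitivity, in the spirit of the cycle analysis in Proposition \ref{rig2}, or invoking Boshernitzan's criterion \cite{bo} as the paper does for unique ergodicity -- but that requires genuine additional work beyond what your plan contains.
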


\begin{proof}
Let $X$ be the square-tiled surface corresponding to $T$. As we already remarked in Section \ref{sec:square-tiled-surfaces}, the hypothesis on the permutations means that the surface $X$ is connected. A square tiled surface satisfies the Veech dichotomy (see \cite{ve89}). Thus the flow in direction $\theta$ is either periodic or minimal and uniquely ergodic. For square-tiled surfaces, periodic directions have rational slope. thus we get the result for the interval exchange transformation. 
\end{proof}

\begin{remark}
For square-tiled surfaces the whole strength of the Veech dichotomy is not needed and the result is already contained in \cite{ve87}.
Also notice that for square-tiled interval exchange transformations minimality implies unique ergodicity by \cite{bo}; we denote by $\mu$ the unique invariant  measure for $T$, namely the Lebesgue measure, and it is ergodic for
$T$.
\end{remark}

Let $T$ be a square-tiled interval exchange transformation. If we denote by $(x,i)$ the point $i-1+x$, then the transformation $T$ is defined on $[0,1[\times \{1,\ldots d\}$ by $T(x,i)=Rx, \phi_x(i)$ where $Rx=x+\alpha$ modulo $1$, and $\phi_x=p_l^{-1}$ if $x\in [0,1-\alpha[$, $\phi_x=p_r^{-1}$ if $x\in [1-\alpha,1[$. Thus $T$ is also a {\em $d$-point extension of a rotation}. This implies that $T$ has a rotation as a topological factor and thus a continuous eigenfunction, either for the topology of the interval or for the natural coding.

Note that in general our square-tiled interval exchanges, even when they are minimal, do not satisfy the usual {\em i.d.o.c. condition}; but when $\alpha$ is irrational, in cases when not all the $\gamma_i$ and $\beta_j$ are discontinuities, a square-tiled interval exchange on $2d$ intervals may indeed  be an i.d.o.c. interval exchange on a smaller number of intervals; to our knowledge this was first remarked by Hmili \cite{hh}, who uses some square-tiled interval exchanges (though they are not described as such) to provide examples of i.d.o.c. interval exchanges with continuous eigenfunctions; indeed, her simplest example is the one in Figure 2 above, which is  an i.d.o.c. $4$-interval exchange with permutation $\pi(1,2,3,4)=(4,2,1,3)$; as $3$-interval exchanges are topologically weak mixing, this ranks among the counter-examples to that property with the smallest number of intervals.

\subsection{Coding of a square-tiled interval exchange transformation}
We look now at the natural coding of $T$, which we denote again by $(X,T)$,  but with a change of notation:
 we denote by $i_l$ the letter $2i-1$ and by $i_r$ the letter $2i$, $1\leq i\leq d$. For any 
 (finite or infinite) word  $u$ on the alphabet $\{i_l,i_r, 1\leq i\leq d\}$, we denote by $\phi (u)$ the 
 sequence deduced from $u$ by replacing each $i_l$ by $l$, each $i_r$ by $r$. For a trajectory $x$ 
 for $T$ under our version of the  natural coding, $\phi(x)$ is a trajectory for $R$ under the coding
 by the partition into two atoms $[l]=[0,1-\alpha[\times \{1,\ldots d\}$, $[r]=[1-\alpha,1[\times \{1,\ldots d\}$, thus it is a trajectory for $R$ under its natural coding (as an exchange transformation of two intervals), and that is called  a Sturmian sequence.

\begin{lemma}\label{hom} For any word $w$ in $L(T)$, there are exactly $d$ words $v$ such that $\phi (w)=\phi (v)$, and for each of this words either $v=w$ or $\bar d(w,v)=1$.\end{lemma}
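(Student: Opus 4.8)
The plan is to exploit the structure of $T$ as a $d$-point extension of the rotation $R$, as described just before the lemma. Recall that $T$ acts on $[0,1[\times\{1,\dots,d\}$ by $T(x,i)=(Rx,\phi_x(i))$, where $\phi_x$ is $p_l^{-1}$ or $p_r^{-1}$ according to which side of $1-\alpha$ the point $x$ lies on. So for a point $(x,i)$ the trajectory under the natural coding is completely determined by two pieces of data: the Sturmian trajectory $\phi(x)$ of the base point $x$ under $R$ (which records the sequence of $l$'s and $r$'s), and the initial fibre index $i\in\{1,\dots,d\}$. Once these are fixed, the full trajectory is $(x_0,x_1,\dots)$ with $x_0$ the letter encoding $(x,i)$, $x_1$ the letter encoding $(Rx,\phi_x(i))$, and so on, the fibre indices being obtained by successively applying $p_l^{-1}$ or $p_r^{-1}$ as dictated by $\phi(x)$.

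First I would make this precise for finite words. Let $w=w_0\dots w_{q-1}$ be in $L(T)$, realised as the $T$-coding of some point $(x,i)$ over the interval $[0,q[$ of times, so that $\phi(w)=u_0\dots u_{q-1}$ is the Sturmian word read off from $x$ under $R$. Now let $v$ be any word with $\phi(v)=\phi(w)$. I claim $v$ is in $L(T)$ and is realised over the same base orbit segment but with a possibly different starting fibre index $j$; conversely every starting index $j\in\{1,\dots,d\}$ produces such a $v$. Indeed, define $j_0=j$ and $j_{m+1}=\psi_m(j_m)$ where $\psi_m=p_l^{-1}$ if $u_m=l$ and $\psi_m=p_r^{-1}$ if $u_m=r$; then set $v_m$ to be the letter $(j_m)_l$ if $u_m=l$ and $(j_m)_r$ if $u_m=r$. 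This $v$ is exactly the $T$-coding of the point $(x,j)$ over times $0,\dots,q-1$, hence lies in $L(T)$, and plainly $\phi(v)=\phi(w)$. Different choices of $j$ give different words because the first letter already determines $j$ (from $v_0$ one reads off $j_0=j$, since the letter $(j)_l$ or $(j)_r$ names the fibre index). So there are exactly $d$ such words $v$, one for each $j\in\{1,\dots,d\}$, and these are all of them: any $v'$ with $\phi(v')=\phi(w)$ has a first letter of the form $(j)_l$ or $(j)_r$ with $u_0$ the corresponding side, which pins down $j$, and then the recursion forces $v'$ to be the word associated to that $j$.

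For the $\bar d$ assertion, compare the word $v=v(j)$ associated to index $j$ with $w=v(i)$. At position $m$, the two letters $v_m$ and $w_m$ sit over the same Sturmian letter $u_m$, so they agree if and only if the fibre indices $j_m$ and $i_m$ agree, where $(i_m)$ is the index sequence for $w$. But the maps $\psi_m$ are bijections of $\{1,\dots,d\}$, so $j_m=i_m$ for some $m$ forces $j_{m'}=i_{m'}$ for all $m'$, in particular $j_0=i_0$, i.e.\ $j=i$ and $v=w$. Hence if $v\ne w$ then $j_m\ne i_m$ for every $m\in\{0,\dots,q-1\}$, so $v_m\ne w_m$ at every position and $\bar d(w,v)=1$. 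This gives the dichotomy in the statement.

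The main thing to be careful about — the only real obstacle — is the passage from ``trajectory of a point'' to ``word in $L(T)$'': a priori a word $w\in L(T)$ is a factor of some trajectory, not necessarily a prefix, so I should either note that being a factor over a time window $[N,N+q[$ is the same as being the prefix-coding of the shifted point $(T^N\text{-image})$, which has the same form $(x',i')$, or simply observe that the whole argument is translation-invariant in time and only uses that over any window the base dynamics is a segment of a Sturmian orbit and the fibre maps along it are bijections. A secondary point is to confirm that the first letter of $v$ genuinely determines the starting index $j$ unambiguously; this is immediate from our notation, since the alphabet letter $i_l$ (resp.\ $i_r$) records both the side and the index $i$. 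With these two observations in place the proof is the routine bijection-counting sketched above.
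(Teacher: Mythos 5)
Your proof is correct and follows essentially the same route as the paper's: both rest on the observation that the fibre permutation applied at each step depends only on the letter $l$ or $r$ (you read this off the skew-product formula $T(x,i)=(Rx,\phi_x(i))$, the paper off its list of admissible two-letter words), so a word in $L(T)$ is determined by its $\phi$-image together with its initial fibre index, and bijectivity of the composed permutations gives the dichotomy $v=w$ or $\bar d(w,v)=1$. Your explicit realisation of all $d$ homologous words as codings of the points $(x,j)$ over the same base orbit segment makes the existence half of ``exactly $d$'' slightly more explicit than the paper's terse count, but the substance of the argument is the same.
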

{\bf Proof}\\
Using the definition of $T$, we  identify the words of length $2$ in $L(T)$:
\begin{itemize}
\item if $\alpha<\frac{1}{2}$, for $1\leq i\leq d$, $i_l$ can be followed by $(p_ri)_l$ and $(p_ri)_r$,  $i_r$ can be followed by $(p_li)_l$;
\item if $\alpha>\frac{1}{2}$,  for $1\leq i\leq d$, $i_r$ can be followed by $(p_li)_r$ and $(p_li)_l$,  $i_l$ can be followed by $(p_ri)_r$.
\end{itemize}
If $w=w_1\ldots w_t$, then $w_i=(u_i)_{s_i}$ with $u_i \in \{1,\ldots ,d\}$ and 
$s_i\in \{l,r\}$, and $u_{i+1}=\pi_i(u_i)$ with $\pi_i\in \{p_l,p_r\}$. The above list of words of length $2$ implies that $\pi_i$ depends only on $s_i$; thus two homologous (= having the same image by $\phi$) words which  have the same $s_i$, have also the same $\pi_i$. Thus the words $v=v_1\ldots v_t$  homologous to $w$ are such that $v_1=x_{s_1}$, $v_i=(\pi_{i-1}\ldots \pi_1 (x))_{s_i}$ for $i>1$, thus there are as many such words as possible letters $x$, and if $x\neq u_1$ then $v_i\neq w_i$ for all $i$ as $\pi_{i-1}\ldots \pi_1$  are bijections.\qed\\

{\em Henceforth we shall make all computations for $\alpha<\frac{1}{2}$}; the complementary case gives exactly the same results, mutatis mutandis.\\

To understand the coding of $T$, we need a complete knowledge of the Sturmian coding of $R$; the one we quote here uses 
 a different version of the classic Euclid algorithm, which is  the self-dual induction of \cite{fz1} in the particular case of two intervals; all what we need to know is contained in the following proposition, which can also be proved directly without difficulty.

\begin{proposition}\label{sdr}
Let the Euclid continued fraction expansion of $\alpha<\frac{1}{2}$ be $[0,a_1+1, a_2,...]$, and the $q_k$, $k\geq 0$, be the denominators of the convergents of $\alpha$. We build inductively real numbers $l_n$ and $r_n$ and words $w_n$, $M_n$, $P_n$ in the following way: 
$l_1=\alpha$, $r_1=1-2\alpha$, $w_{1}=l$, $M_{1}=l$, $P_{1}=rl$. Then
      \begin{itemize}
\item whenever $l_n>r_n$, $l_{n+1}=l_{n}-r_{n}$,
$r_{n+1}=r_{n}$,
$w_{n+1}=w_{n}P_{n},$
$P_{n+1}=P_{n},$ 
$M_{n+1}=M_{n}P_{n};$
\item whenever $r_n>l_n$,  $l_{n+1}=l_{n}$,
$r_{n+1}=r_{n}-l_n$,
$w_{n+1}=w_{n}M_{n},$
$P_{n+1}=P_{n}M_{n},$
$M_{n+1}=M_{n}.$ 
\end{itemize}
Then $r_n>l_n$ for $1\leq n\leq a_1-1$,
$r_n<l_n$ for $a_1\leq n\leq a_1+a_2-1$, and so on.
 The $w_n$ are all the nonempty bispecial words of $L(R)$, $w_{n+1}$ being the shortest bispecial word beginning with $w_{n}$; moreover, $M_n$ and $P_n$ constitute all the {\em return words} of $w_n$ (namely, words $Z$ such that $w_n$ occurs exactly twice in $w_nZ$, once as a prefix and once as a suffix). \end{proposition}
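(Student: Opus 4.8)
The plan is to recognize that the proposition is essentially a restatement of the classical theory of Sturmian words via the Euclid/continued fraction algorithm, specialized to the rotation $R$ of angle $\alpha$ on the two-interval partition $[l]=[0,1-\alpha[$, $[r]=[1-\alpha,1[$. First I would set up the inductive geometry on the circle $\GT=\GR/\GZ$: at each stage $n$ I would exhibit a pair of intervals $I_n^l=[0,l_n[$ and $I_n^r=[-r_n,0[$ (indices read mod $1$) together with the fact that the first-return map of $R$ to $I_n^l\cup I_n^r$ is again a two-interval exchange, whose two return-time words (the names of the two Rokhlin towers over $I_n^l$ and $I_n^r$) are exactly $M_n$ and $P_n$. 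The base case $l_1=\alpha$, $r_1=1-2\alpha$, with $w_1=l$, $M_1=l$, $P_1=rl$ is checked by hand from the definition of $R$'s coding; the inductive step is precisely the self-dual induction of \cite{fz1} in the two-interval case: when $l_n>r_n$ one subdivides the $l$-tower, the induced interval shrinks on the left side, and the new return words are obtained by the concatenation rules $P_{n+1}=P_n$, $M_{n+1}=M_nP_n$, $w_{n+1}=w_nP_n$; symmetrically when $r_n>l_n$. The statement that $r_n>l_n$ for $1\le n\le a_1-1$, then $r_n<l_n$ for $a_1\le n\le a_1+a_2-1$, and so on, is just the unwinding of the subtractive Euclid algorithm: the number of consecutive steps of each type is read off from the partial quotients $a_1+1,a_2,a_3,\dots$, and the accumulated products $|M_n|,|P_n|$ are the continuant denominators $q_k$.

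Next I would identify the $w_n$ as the bispecial words. Here I would use the standard correspondence for Sturmian systems: a word $w$ is bispecial in $L(R)$ if and only if $w$ (placed as $[w]$ on the circle) is, up to the rotation, a maximal interval on which both one-sided extensions branch, which happens exactly when $w$ is the common part forced by an induced interval of the form $I_n^l\cup I_n^r$ around the discontinuity. Concretely, the left special words of a Sturmian language are exactly the reversed prefixes of the characteristic sequence and the right special ones the prefixes; the bispecial ones are the finite intersection, and these are precisely the $w_n$ produced by the algorithm. The claim that $w_{n+1}$ is the shortest bispecial word properly extending $w_n$ follows because between stages $n$ and $n+1$ the induction performs a single elementary subdivision, so no bispecial word is created strictly in between; this is where I would invoke minimality of $L(R)$ (Sturmian systems are minimal) to guarantee that $w_n$ does recur and that $M_n,P_n$ are genuine return words, i.e. $w_n$ occurs exactly twice in $w_nM_n$ and in $w_nP_n$.

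Finally, the return-word statement: I would argue that the two Rokhlin towers over $I_n^l$ and $I_n^r$ partition the space, their bases are separated by the orbit of the discontinuity, and reading the $P$-partition names along a full tower gives, after prepending $w_n$, exactly the two words $w_nM_n$ and $w_nP_n$ in which $w_n$ reappears only at the two ends; conversely any return word of $w_n$ corresponds to an excursion from $I_n^l\cup I_n^r$ back to itself, of which there are exactly two types by the two-interval structure. The main obstacle I anticipate is purely bookkeeping: matching the orientation conventions (left vs.\ right intervals, the roles of $p_l,p_r$ and the choice $\alpha<\tfrac12$ so that $a_1+1$ rather than $a_1$ appears as the first partial quotient) and verifying that the concatenation order in $w_{n+1}=w_nP_n$ versus $w_nM_n$ is consistent with "shortest bispecial extension beginning with $w_n$" — there is a genuine risk of an off-by-one or a reversal here, and the cleanest way to avoid it is to prove everything by induction simultaneously (the geometry of $I_n^l,I_n^r$, the tower names $M_n,P_n$, the bispeciality of $w_n$, and the return-word property), so that each inductive step only has to check one elementary subdivision rather than reconcile several independent descriptions. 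Since the proposition explicitly allows a direct proof, I would in fact favor this self-contained induction over quoting \cite{fz1}.
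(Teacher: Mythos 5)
Your plan is essentially the same argument the paper relies on: the paper gives no proof of Proposition~\ref{sdr}, quoting it as the two-interval case of the self-dual induction of \cite{fz1} (``which can also be proved directly without difficulty''), and your simultaneous induction on the induced intervals, the two Rokhlin towers with names $M_n,P_n$, the bispecial words $w_n$, and the return-word property is exactly that induction carried out directly. The only points to tidy are the bookkeeping items you already flag: with the coding $[l]=[0,1-\alpha[$, $[r]=[1-\alpha,1[$ the left special factors are the prefixes of the characteristic word and the right special ones their reversals (you state the correspondence the other way round), and the proposition's convention reads the return word starting after the occurrence of $w_n$, so the tower bases must be matched to $l_n,r_n$ accordingly; neither affects the structure of the proof.
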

 
 $\alpha$ has {\em bounded partial quotients} if the $a_i$ are bounded.\\

 The following lemma is also well known, but we did not find a proof in the existing literature.

\begin{lemma}\label{ebs} The words defined in Proposition 
 \ref{sdr} satisfy for all $n$
 \begin{itemize}
 \item $\ab P_{n} \ab + \ab M_{n}\ab=\ab w_n\ab +2$,
 \item $P_nM_n$ and $M_nP_n$ are right extensions of $P_1M_1$ and $M_1P_1$ by the same word.
 \end{itemize}

\noindent For $n\geq a_1+1$,  $w_n$ 
has exactly two extensions of length $\ab w_n\ab + \ab P_{n} \ab \wedge \ab M_{n}\ab$, and these
are   $w_nlrV_n$ and $w_nrlV_n$ for the same word $V_n$.

\noindent If $\alpha$ has bounded partial quotients,
there exists a constant $K_1$ such that $\ab P_{n} \ab \wedge \ab M_{n}\ab>K_1\ab w_n \ab$ for all $n$. 
\end{lemma}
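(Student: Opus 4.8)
The three claims should be proved in the order they are stated, since each uses the structure of the inductive construction in Proposition \ref{sdr}, and the last relies implicitly on controlling $|w_n|$ versus $|P_n|\wedge|M_n|$. First I would establish the length identity $|P_n|+|M_n|=|w_n|+2$ by induction on $n$. The base case $n=1$ is immediate: $|P_1|+|M_1| = 2+1 = 3 = |w_1|+2$. For the inductive step, in the case $l_n>r_n$ we have $w_{n+1}=w_nP_n$, $P_{n+1}=P_n$, $M_{n+1}=M_nP_n$, so $|P_{n+1}|+|M_{n+1}| = |P_n| + |M_n| + |P_n| = (|w_n|+2) + |P_n| = |w_{n+1}|+2$; the case $r_n>l_n$ is symmetric, exchanging the roles of $P$ and $M$. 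This is routine bookkeeping.

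Next, the statement that $P_nM_n$ and $M_nP_n$ are right extensions of $P_1M_1=rll$ and $M_1P_1=lrl$ by a common word: again induct. One shows simultaneously that $P_nM_n$ and $M_nP_n$ share a common prefix of length $|P_n|\wedge|M_n|$ — indeed a common prefix up to the last two letters — and that this prefix, suitably, determines $V_n$. Concretely, from the recursions one sees $P_{n+1}M_{n+1}$ and $M_{n+1}P_{n+1}$ are obtained from $P_nM_n$, $M_nP_n$ by appending the same word (either $P_n$ or $M_n$ on the appropriate side), so the "common right extension" property propagates. The claim about the two length-$(|w_n|+|P_n|\wedge|M_n|)$ extensions of $w_n$ for $n\geq a_1+1$ follows from the return word description in Proposition \ref{sdr}: the only words that can follow $w_n$ are prefixes of $P_n$-then-continue or $M_n$-then-continue, and since $|w_n|$ is bispecial its two one-sided extensions split as $w_nl$ and $w_nr$; pushing this out to length $|P_n|\wedge|M_n|$ and using that $P_n$, $M_n$ begin with $rl$, $lr$ respectively (for $n$ past the first block, where the algorithm has alternated at least once) gives the form $w_nlrV_n$ and $w_nrlV_n$ with a common tail $V_n$. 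Here I would need to double-check the initial-segment hypothesis $n\geq a_1+1$ is exactly what guarantees both "letters" $P_n$ and $M_n$ have already been built up enough to start with the digrams $rl$ and $lr$; this is the one place requiring a little care with the indexing of the Euclid expansion.

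The last claim — existence of $K_1$ with $|P_n|\wedge|M_n| > K_1|w_n|$ when the partial quotients are bounded — is where the real content lies, and I expect it to be the main obstacle. The point is that within a block where the algorithm repeatedly does the same move (say $a_{j}$ consecutive steps of type $l_n>r_n$), the shorter of $|P_n|,|M_n|$ stays constant while $|w_n|$ grows by that constant each step, so the ratio $(|P_n|\wedge|M_n|)/|w_n|$ can degrade by a factor comparable to $a_j$ across one block. The bound on the $a_j$ by some $A$ caps this degradation. Quantitatively, I would track the pair $(|P_n|,|M_n|)$ through the $2\times 2$ matrix action of the Euclid steps — it is essentially the continued-fraction matrix product — and observe that $|w_n| = |P_n|+|M_n|-2 \leq |P_n|+|M_n|$, so it suffices to bound $(|P_n|\wedge|M_n|)/(|P_n|\vee|M_n|)$ from below. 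After a full block of length $a_j$, the larger coordinate is at most $(a_j+1)$ times the smaller plus lower-order terms, so the ratio after the block is $\geq 1/(A+2)$ or so, and at the end of the next block the roles swap but the same bound holds; one then checks the ratio never drops below a fixed constant depending only on $A$ at the intermediate steps within a block either. Taking $K_1$ to be (a safe fraction of) that constant finishes it. The delicate part is handling the steps strictly inside a block, where one coordinate is frozen and the other is still growing, to be sure the infimum over all $n$ — not just block endpoints — is positive; I would phrase this as: the minimum of $(|P_n|\wedge|M_n|)/|w_n|$ over a block is attained at the block's end, which reduces everything to the endpoint estimate.
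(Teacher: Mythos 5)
Your plan reproduces the paper's proof essentially step for step: the first two bullets by induction on the recursion formulas, the third by combining the return-word description with the common right-extension property, and the last by bounding $(|P_n|\vee|M_n|)/(|P_n|\wedge|M_n|)$ by roughly $K_0+1$ inside each block (the minimum of the ratio indeed occurring at block ends) and invoking the length identity $|P_n|+|M_n|=|w_n|+2$. One small correction that does not affect the outcome: $P_nM_n$ and $M_nP_n$ share no common prefix --- they differ exactly in their \emph{first} two letters and agree from the third letter on --- and it is this fact (the second bullet, applied to the prefixes of $w_nP_nM_n$ and $w_nM_nP_n$ of length $|w_n|+|P_n|\wedge|M_n|$), rather than merely the initial digrams $rl$ and $lr$, that produces the common tail $V_n$.
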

{\bf Proof}\\
The first two assertions come from the recursion formulas.
 Then $n>a_1$  ensures that $\ab M_n \ab$ and $\ab P_n \ab$ are at
 least $2$; hence 
two possible extensions of $w_n$ of length $\ab w_n \ab+\ab P_{n} \ab \wedge \ab M_{n}\ab$ are the prefixes of that length of 
$w_nM_n$ and $w_nP_n$, hence of $w_nM_nP_n$ and $w_nP_nM_n$, thus they are of the form $w_nlrV_n$ and $w_nrlV_n$. Moreover, as there are no right special words 
in $L(R)$ sandwiched between $w_n$ and $w_nM_n$ or $w_nP_n$, there are only two extensions of that 
length of $w_n$, which proves the third assertion.\\

Let $K_0$ be the maximal value of the partial quotients of $\alpha$; because of the recursion formulas, 
at the beginning of a string of $n$ with $l_n<r_n$, we have $\ab P_{n} \ab < \ab M_{n} \ab$, then
for every $n$ in that string except the first one, and for the $n$ just after the end of that string, 
$\ab M_{n} \ab<\ab P_{n} \ab< (K_0+1)\ab M_{n} \ab$, and mutatis mutandis for strings  of $n$ with $l_n>r_n$. Thus we 
get the last assertion from the first one.
\qed\\

\section{Proof of Theorem \ref{thm:main}} \label{sec:proof}

\begin{proposition}\label{lmr} 
If  $\alpha$ has bounded partial quotients, there exists $C$ such that, for any 
integer $e$ with $e\leq d$ and $e\geq 1+\#\{i;p_lp_ri\neq p_rp_li\}$, if $v_i$ and $v'_i$, $1\leq i\leq e$, are words  in $L(T)$, of equal length $q$ such that
\begin{itemize}
 \item 
$\sum_{i=1}^e \bar d(v_i,v'_i)< C$,
\item $\phi (v_i)$ is the same word $u$ for all $i$,
\item $\phi (v'_i)$ is the same word $u'$ for all $i$,
\item $v_i\neq v_j$ for $i\neq j$.
\end{itemize}
Then $\{1,\ldots q\}$ is the disjoint union of three (possibly empty) integer intervals $I_1$, $J_1$, $I_2$ (in increasing order) such that 
\begin{itemize}
\item
$v_{i,J_1}=v'_{i,J_1}$ for all $i$, 
\item $\sum_{i=1}^e\bar d (v_{i,I_1},v'_{i,I_1})\geq 1$ if $I_1$ is nonempty, 
\item $\sum_{i=1}^e\bar d (v_{i,I_2},v'_{i,I_2})\geq 1$ if $I_2$ is nonempty,
\end{itemize}
where $z_{i,H}$ denotes the word made with the $h$-th letters of the word $z_i$ for all $h$ in $H$. \\
This implies in particular that $\# J_1\geq 1- \sum_{i=1}^e \bar d(v_i,v'_i)$. \end{proposition}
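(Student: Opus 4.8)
The plan is to work position by position through the common Sturmian image, reduce to a dynamics of the ``first coordinates'', and then let the Sturmian combinatorics together with the group‐theoretic hypothesis do the rest. First I would write $v_{i,h}=(a^{(i)}_h)_{s_h}$, where $s_h\in\{l,r\}$ is the $h$-th letter of $u$ and $a^{(i)}_h\in\{1,\dots,d\}$; by Lemma~\ref{hom} the $e$ numbers $a^{(1)}_h,\dots,a^{(e)}_h$ are pairwise distinct for every $h$, and passing from $h$ to $h+1$ this vector is transformed entry‑wise by $p_r$ (if $u_h=l$) or by $p_l$ (if $u_h=r$); the same for $v'_i$ with the letters $s'_h$ of $u'$ and coordinates $b^{(i)}_h$. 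Set $D=\{h:u_h\ne u'_h\}$. On each \emph{run} (maximal subinterval of $\{1,\dots,q\}\setminus D$) the two transfer rules agree, so the set $\{i:a^{(i)}_h\ne b^{(i)}_h\}$ is constant along the run, while every $h\in D$ has all $e$ pairs disagreeing (opposite subscripts). Call a run \emph{good} if $v_i=v'_i$ on it for all $i$, \emph{bad} otherwise. I would first dispose of the degenerate cases: if $D=\emptyset$ then $v_i$ and $v'_i$ are $\phi$‑homologous, so by Lemma~\ref{hom} and $\sum_i\bar d(v_i,v'_i)<C\le1$ we get $v_i=v'_i$ for all $i$ and take $J_1=\{1,\dots,q\}$; and in any case $\sum_h\#\{i:v_{i,h}\ne v'_{i,h}\}=q\sum_i\bar d(v_i,v'_i)<qC\le q$, so some position lies outside $D$ and outside every bad run, i.e. at least one good run exists.

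Next I would invoke the Sturmian structure. Since $\bar d(u,u')\le\bar d(v_i,v'_i)$, $u$ and $u'$ are $\bar d$‑close factors of $L(R)$, and by Proposition~\ref{sdr} and Lemma~\ref{ebs} — this is exactly where bounded partial quotients are used — the set $D$ is organised into well‑separated ``episodes'': in the interior of $[1,q]$ each maximal block of $D$ has length $2$ and sits just after a common occurrence of a bispecial word $w_n$, one of $u,u'$ reading $lr$ and the other $rl$ there and both then reading the same word $V_n$, of length $\ge K_1|w_n|-2$, afterwards; near the two ends of the window the contribution is controlled and will be pushed into $I_1$ and $I_2$. Thus the interior of $[1,q]$ is cut by these length‑$2$ blocks into long runs.

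The heart of the argument is the interaction of these episodes with the lift dynamics. I would compute that crossing one length‑$2$ block multiplies the transfer bijection $\Psi_h:=A_h^{-1}A'_h$ (with $A_h,A'_h$ the accumulated products of $p_l,p_r$ along $u,u'$) on the left by a conjugate of the commutator $[p_r,p_l]=p_r^{-1}p_l^{-1}p_rp_l$, whose fixed‑point set is exactly $\{1,\dots,d\}\setminus Z$ with $Z=\{i:p_lp_ri\ne p_rp_li\}$ (this commutator is of course the one whose orbits carry the singularities of the flat metric). A run is good precisely when its transfer bijection restricts, on the $e$‑element set $\{c'_1,\dots,c'_e\}$ of first coordinates of the $v'_i$, to the ``matching'' map $c'_i\mapsto c_i$; hence two runs are both good iff the product of the commutator‑conjugates picked up between them fixes the $e$‑set $\{c_1,\dots,c_e\}$ pointwise. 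The hard part — and the place where $e\ge1+\#\{i:p_lp_ri\ne p_rp_li\}$ enters — is to show that no nontrivial such product can fix an $e$‑point set, so that at most one run can be good (or, if a few short good runs survive, that their total length is negligible compared with $(e-1)\#D$ and they can be absorbed into $I_1,I_2$): for a single block this is the statement that a conjugate of $[p_r,p_l]$ cannot fix $e$ points, and for several blocks one must use the combinatorial relations among the $w_n$'s (again via bounded partial quotients) to rule out accidental cancellations. I expect this step to be the main obstacle.

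Granting it, I would take $J_1$ to be the unique (long) good run. Then $I_1=[1,\min J_1-1]$ and $I_2=[\max J_1+1,q]$ consist only of points of $D$ and of bad runs; on each point of $D$ all $e$ pairs disagree, and on a bad run some index $i$ disagrees at every position, so $\sum_{h\in I_1}\#\{i:v_{i,h}\ne v'_{i,h}\}\ge\#I_1$, i.e. $\sum_{i=1}^e\bar d(v_{i,I_1},v'_{i,I_1})\ge1$ whenever $I_1\ne\emptyset$, and likewise for $I_2$. Finally, summing $\#\{i:v_{i,h}\ne v'_{i,h}\}$ over $h\in\{1,\dots,q\}$: the contribution of $J_1$ is $0$ and those of $I_1,I_2$ are at least $\#I_1,\#I_2$, so $q\sum_{i=1}^e\bar d(v_i,v'_i)\ge\#I_1+\#I_2=q-\#J_1$, whence $\#J_1\ge q\bigl(1-\sum_{i=1}^e\bar d(v_i,v'_i)\bigr)\ge 1-\sum_{i=1}^e\bar d(v_i,v'_i)$, which is the last assertion.
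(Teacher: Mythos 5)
Your setup coincides with the paper's: compare $u$ and $u'$, use Lemma \ref{hom} to see that on each agreement run the lifted words are either equal or completely different, and bring in the bispecial structure of Proposition \ref{sdr} and Lemma \ref{ebs}. But the proof has a genuine gap exactly where you flag it, and the route you propose for that step would not work. First, a smaller point: it is not true that every interior maximal block of $D$ has length $2$ and is followed by a long common word $V_n$; this holds only when the preceding bispecial agreement block is a $w_n$ with $n\geq a_1+1$. When the intervening bispecial words are short (or empty) the disagreement stretches can be long (e.g.\ two Sturmian factors shifted by one step disagree on long alternating blocks), and the paper must treat this case separately (the threshold $m_1$ and the contribution $\frac{1}{2m_1+1}$ per group). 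Second, and decisively: your key claim --- that a nontrivial product of conjugates of $[p_r,p_l]$ picked up across several episodes cannot fix an $e$-point set, hence at most one run is good --- is unproven and, as a group-theoretic statement, false in general: a conjugate of the commutator multiplied by a conjugate of its inverse (which occurs whenever the commutator is conjugate to its inverse, e.g.\ an involution) is the identity, so non-adjacent runs can perfectly well both be ``good''. Your fallback (absorbing surviving good runs into $I_1,I_2$) also does not fit the required conclusion, since $I_1,J_1,I_2$ must be three intervals in order, so scattered good runs cannot simply be absorbed.

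The paper never proves, and never needs, a statement about products over several episodes. Its argument is local and quantitative: if an agreement block $J_j$ is long and $v_{i,J_j}=v'_{i,J_j}$ for all $i$, then the last letters $s_i$ take $e$ distinct values, the two extensions $w_nlrV_n$ and $w_nrlV_n$ of Lemma \ref{ebs} force the continuations to start with $(p_rs_i)_l(p_rp_rs_i)_r(p_lp_rp_rs_i)_l$ versus $(p_rs_i)_r(p_lp_rs_i)_l(p_rp_lp_rs_i)_l$, and the hypothesis on $e$ guarantees some $i$ with $p_rp_lp_rs_i\neq p_lp_rp_rs_i$, so the \emph{next} agreement block $J_{j+1}$ is completely different for that $i$; the bounded-partial-quotient estimates of Lemma \ref{ebs} ($\#I_{j+1}+\#J_{j+1}>K_1\#J_j$, and its mirror) then convert each group $G_j=J_j\cup I_{j+1}\cup J_{j+1}$ into a contribution to $\sum_i\bar d(v_i,v'_i)$ bounded below by a constant, so that $\sum_i\bar d(v_i,v'_i)<C$ forces $s=1$, i.e.\ a single agreement block; the boundary blocks are handled by the reduction to hypothesis $(H0)$ (trimming or extending at the two ends). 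To complete your proof you would need to replace your global fixed-point claim by this adjacent-block argument together with the length bookkeeping; as it stands, the central step is missing and the proposed substitute is not salvageable.
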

\begin{proof}  
We compare first $u$ and $u'$;  note that if we see $l$, resp. $r$, in some word $\phi (z)$ we see
some $i_l$, resp. $j_r$, at the same place on $z$; 
 thus $\bar d (z,z')\geq \bar d(\phi(z),\phi(z'))$ for all $z$, $z'$; in particular, if $
 \bar d(u,u')=1$, then $\bar d (v_i, v'_i)=1$ for all $i$ and our assertion is proved.
 
 Thus we can assume $\bar d(u,u')<1$.
 We partition $\{1,\ldots q\}$ into
successive integer intervals where $u$ and $u'$ agree or disagree: we get intervals $I_1$, $J_1$,
\ldots, $I_r$, $J_s$, $I_{s+1}$, where $r$ is at least $1$, the intervals are nonempty except 
possibly for $I_1$ or $I_{s+1}$, or both, and for all $j$, $u_{J_j}=u'_{J_j}$, and, except if $I_j$ is empty, $u_{I_j}$ and $u'_{I_j}$ are completely different, i.e.
their distance $\bar d$ is one.

Then for $i\leq s-1$, the word $u_{J_i}=u'_{J_i}$ is right special in the language $L(R)$ of the rotation, and this word is left special if $i\geq 2$.\\

{\em $(H0)$ We suppose first that $u_{J_1}=u'_{J_1}$ is also left special and $u_{J_r}=u'_{J_r}$ is also right special}.\\

Then all the $u_{J_i}=u'_{J_i}$ are bispecial; thus, for a given $i$,  $u_{J_i}=u'_{J_i}$  must be some $w_n$ of  Proposition \ref{sdr}; then Lemma \ref{ebs} implies that 
 either $\# J_j$ is smaller than a fixed $m_1$,  or 
 $\#I_{j+1}=2$ and $$\#I_{j+1}+
\#J_{j+1}>K_1\ab w_{n} \ab\geq K_1 \#J_j,$$

Similar considerations for $R^{-1}$ imply that for $j>1$ either $\# J_j< m_1$, or $\#I_{j}=2$ and 
$\#J_{j-1}+\# I_j>K_1 \#J_j$.

Note that this does not give any conclusion on $\bar d(u,u')$, and
indeed everybody knows $R$ is rigid, and thus admits a lot of $\bar d$-neighbours.\\

We look now at the words $v_i$ and $v'_i$ for some $i$; by the remark above, $v_{i,I_j}$ and $v'_{i,I_j}$ are completely different if $I_j$ is nonempty.
As for $v_{i,J_j}$ and $v'_{i,J_j}$, they have the same image by $\phi$, thus by Lemma \ref{hom} they are equal if they begin by the same letter, 
completely different otherwise.

Moreover, suppose that $J_j$ has length at least $m_1$, and $v_{i,J_j}=v'_{i,J_j}=z_i$, ending with
the letter $s_i$: because of Lemma \ref{ebs} applied to $\phi (z_i)$.
and taking imto account 
the possible words of length $2$ in $L(T)$, $z_i$ has two extensions of length $\ab z_i\ab +3$ in $L(T)$, and 
they are $z_i(p_rs_i)_l(p_rp_rs_i)_r(p_lp_rp_rs_i)_l$ 
and 
$z_i(p_rs_i)_r(p_lp_rs_i)_l(p_rp_lp_rs_i)_l$, which gives us the first letters of the two words $v_{i,J_{j+1}}$ and $v'_{i,J_{j+1}}$.\\

We estimate $c=\sum_{i=1}^e \bar d(v_i,v'_i)$, by looking at 
the indices in some set 
$G_j=
J_j\cup I_{j+1}\cup J_{j+1}$, for any $1\leq j\leq r-1$;
\begin{itemize}
\item if both $\#J_j$ and $\#J_{j+1}$ are
smaller than $m_1$ the contribution of $G_j$ to the sum $c$ is at least $\frac{1}{2m_1+1}$ as
$I_{j+1}$ is nonempty by construction;
\item if $\#J_j\geq m_1$, 
 and 
for at least one $i$ $v_{i,J_j}$ and $v'_{i,J_j}$ are completely different, then the contribution of $G_j$ to $c$
is bigger than $\frac{1}{2} \wedge \frac{K_1}{K_1+1}$ as either $\# J_{j+1}<m_1$ or 
$\# J_j+\# I_{j+1}>K_1\# J_{j+1}$;

\item if 
$\#J_j\geq m_1$ and
for all $i$, $v_{i,J_j}=v'_{i,J_j}=z_i$; then, because the $v_i$ are all
different and project by $\phi$ on the same word,  the last letter $s_i$ of $z_i$
takes $e$ different values when $i$ varies; thus  
$p_rp_lp_rs_i\neq p_lp_rp_rs_i$ for at least one $i$, and this ensures that for this $i$, $v_{i,J_{j+1}}$ and $v'_{i,J_{j+1}}$
 are completely different. As $\#J_{j+1}+\#I_{j+1}>K_1\#J_j$, the contribution of $G_j$ to $c$ 
is bigger than  $\frac{K_1}{K_1+1}$;

\item if $\#J_{j+1}\geq m_1$, we imitate the last two items by looking in the other direction.

\end{itemize}

Now, if $s$ is even, we can cover $\{1,\ldots q\}$ by sets $G_j$ and some internediate $I_l$, and get 
that $c$ is at least a constant $K_2$. If $s$ is odd and at least $3$,
by deleting either $I_1$ and $J_1$, or $J_s$ and $I_{s+1}$, we cover at least half of $\{1,\ldots q\}$ 
by sets $G_j$ and some internmdiate $I_l$, and $c$ is at least $\frac{K_2}{2}$.\\

Thus if $\sum_{i=1}^e \bar d(v_i,v'_i)$ is smaller than a constant $K_3$, we must have $s=1$; then if 
$\sum_{i=1}^e \bar d(v_i,v'_i)<1$, $v_{i,J_{1}}=v'_{i,J_{1}}$. Thus we get our conclusion if $c<C=K_3\wedge 1$, under the extra hypothesis $(H0)$.\\\\

If $(H0)$ is not satisfied,  we modify the $v_i$ and $v'_i$ to $\tilde v_i$ and $\tilde v'_i$ to get it.

Note that if $u_{J_1}=u'_{J_1}$ is not left special, then $I_1$ is empty, and $u$ and $u'$ are uniquely extendable to the left, and by the same letter; we continue to extend uniquely to the left as long as the extension of $u_{J_1}=u'_{J_1}$ remains not left special, and this will happen until we have extended $u$ and $u'$ (by the same letters)  to a length $q_0$. As for 
$v_{i,J_{1}}$ and $v'_{i,J_{1}}$, they  are either equal or completely different; then
\begin{itemize}
\item if for at least one $i$ $v_{i,J_{1}}$ and $v'_{i,J_{1}}$ are completely different, we delete the prefix $v_{i,J_{1}}$ from every $v_i$, the prefix $v'_{i,J_{1}}$ from every
  $v'_i$;
\item if for all $i$ $v_{i,J_{1}}=v'_{i,J_{1}}$; then  $v_i$ and $v'_i$ are uniquely extendable to the left, and by the same letter,  as long as $u$ and $u'$ are; then for all $i$, we take the unique left extensions of
length $q_0$ of  $v_i$ and $v'_i$.
\end{itemize}

If $u_{J_s}=u'_{J_s}$ is not right special, we do the same operation on the right; thus we get new pairs of words $\tilde v_i$ and $\tilde v'_i$, of length $\tilde q$. In building them, 
we have added no difference (in the sense of counting $\bar d$) between $v_i$ and $v'_i$, but have possibly deleted a set of $q_1$ indices which gave a contribution at least one to the sum $c$ and thus created at least $q_1$ of these differences, while when we extend the words we can only decrease the distances $\bar d$; thus if $c<C\leq 1$, 
$\sum_{i=1}^e \bar d(\tilde v_i,\tilde v'_i)\leq \frac{qc-q_1}{q-q_1}\leq c$. Then our pairs satisfy all the conditions of the part we have already proved (the $\tilde v_i$ are all different because they are different on at least one letter and have the same image by $\phi$).

Thus $\{1,\ldots \tilde q\}$ is partitioned into $\tilde I_1$, $\tilde J_1$, $\tilde I_2$, with the properties in the conclusion of the proposition. \\

We go back now to the original $v_i$ and $v'_i$. 
\begin{itemize}
\item Suppose first that to get the new words we have either shortened or not modified the $v_i$ on the left, and either shortened and not modified the $v_i$ on the right: then we get our conclusion with $J_1$ a translate of $\tilde J_1$, $I_1$ the union of a translate of $\tilde I_1$ and an interval $I_0$ corresponding to a part we have cut,
 $I_2$ the union of a translate of $\tilde I_2$ and an interval $I_3$ corresponding to a part we have cut. 
\item Suppose that to get the new words we have either shortened or not modified the $v_i$ on the left, and lengthened the $v_i$ on the right: then we get our conclusion with $J_1$ a translate of a  nonempty subset of $\tilde J_1$, $I_1$ the union of a translate of $\tilde I_1$ and an interval $I_0$ corresponding to a part we have cut,
 $I_2$ empty as $\tilde I_2$. 
\item A symmetric reasoning applies if to get the new words we have either shortened or not modified the $v_i$ on the rightt, and lengthened the $v_i$ on the left.
\item Suppose that to get the new words we have lengthened the $v_i$ on the right and on the left: then we get our conclusion with $J_1$ a translate of a nonempty subset of $\tilde J_1$, $I_1$ empty as $\tilde I_1$,
 $I_2$ empty as $\tilde I_2$. 
 \end{itemize}
\end{proof}

\begin{remark}\label{ctex}
Our proposition is not valid for $e\leq \#\{i;p_lp_ri\neq p_rp_li\}$:
if we take 
$v_i$ and $v'_i$ such that $\phi (v_i)=w_nlry_n$, $\phi (v'_i)=w_nrly_n$, and 
that the $\ab w_n \ab$-th letter of  $v_i$ and $v'_i$ is $s_i$ where $p_rp_lp_rs_i=p_lp_rp_rs_i$, then the  $v_i$ and $v'_i$ 
do not satisfy the conclusion if $y_n$ and $w_n$ are of comparable lengths,
though $\sum \bar d (v_i, v'_i)\leq d\frac{2}{\ab w_n \ab +\ab y_n\ab +2}$ may
be arbitrarily small.
\end{remark}

We now prove the hard part of Theorem \ref{thm:main} from Proposition \ref{lmr}.

\begin{proof}
We look at the $2d$ intervals $\Delta_i$ giving the natural coding.

Assume that $(X,T)$ is rigid; then there exists a sequence $q_k$ tending to infinity such that 
$\mu (\Delta_i\Delta T^{q_k}\Delta_i)$ tends to zero for $1\leq i\leq 2d$. 

We fix $\epsilon<\frac{C}{2d^2}$, and
$k$ such that for all $i$
$$\mu (\Delta_i\Delta T^{q_k}\Delta_i)<\epsilon.$$

Let $A_i=\Delta_i\Delta T^{q_k}\Delta_i$; 
by the ergodic theorem, $\frac{1}{m}\sum_{j=0}^{m-1} 1_{T^jA_i}(x)$ tends to $\mu (A_i)$, for almost all $x$ (indeed
for all $x$ because $(X,T)$ is uniquely ergodic). Thus for all $x$, there exists $m_0$ such that for all $m$ larger 
than some $m_0$ and 
all $i$, 
$$\frac{1}{m}\sum_{j=0}^{m-1} 1_{T^jA_i}(x)<\epsilon.$$ By summing these $2d$ inequalities, we get that 
$$\bar d(x_0\ldots x_{m-1}, x_{q_k}\ldots x_{q_k+m-1})<2d\epsilon$$ for all $m>m_0$. Moreover, given an $x$, we can
choose $m_0$ 
such that for all $m>m_0$ these inequalities are satisfied if we replace $x$ by any of the $d$ different points $x^i$ 
such that $\phi (x^i)=\phi (x)$.\\

We choose such an $x$, and apply Proposition \ref{lmr} to $e=d$ and the words $v_i=(x^i)_0, \ldots ,(x^i)_{m-1}$,
$v'_i=(x^i)_{q_k}, \ldots ,(x^i)_{q_k+m-1}$. As
 we know that $c$  is smaller than $2d^2\epsilon$, we get that for  any
$m>m_0$, the words $(x_0\ldots x_{m-1})$ and $(x_{q_k}\ldots x_{q_k+m-1})$ must coincide on a connected
part
larger than
$m$ multiplied by a constant; thus
 $x_l\ldots x_{p-1}$ and $x_{q_k+l}\ldots x_{q_k+p-1}$ coincide for some fixed $l$ and all $p$ 
 large enough,
but this implies that there is a periodic point, which has been disproved in  Proposition \ref{min}.
 
\end{proof}\

The other direction of Theorem \ref{thm:main}  is already known, but we include it with a short proof using our combinatorial methods.

\begin{proposition}\label{rig2} Let $T$ be a minimal square-tiled interval exchange transformation such that $\alpha$ is irrational and has unbounded partial quotients; then $(X,T,\mu)$ is  rigid.
\end{proposition}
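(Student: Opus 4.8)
The plan is to exploit the fact that $T$ is a $d$-point extension of the rotation $R$ of angle $\alpha$, combined with the classical fact that a rotation with unbounded partial quotients is rigid along the sequence of denominators $q_{n_j}$ of those convergents $p_{n_j}/q_{n_j}$ for which the next partial quotient $a_{n_j+1}$ is large. Concretely, first I would recall from Proposition \ref{sdr} and Lemma \ref{ebs} that along such a subsequence the bispecial word $w_{n_j}$ of $L(R)$ is extremely long compared with its return words on one side; equivalently, in the self-dual induction one of $l_n,r_n$ stays fixed for $a_{n_j+1}$ steps while the other shrinks. In more familiar terms: when $a_{m+1}$ is large, $\|q_m\alpha\|$ is very small, so $R^{q_m}$ is uniformly close to the identity on $[0,1[$. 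I would fix the rigidity sequence to be $Q_j = q_{m_j}$ for a sequence $m_j\to\infty$ with $a_{m_j+1}\to\infty$.

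The key step is to upgrade rigidity of $R$ along $Q_j$ to rigidity of $T$ along a related sequence. Since $T(x,i) = (Rx, \phi_x(i))$ with $\phi_x$ taking only the two values $p_l^{-1}, p_r^{-1}$, the iterate $T^{Q_j}$ acts as $(x,i)\mapsto (R^{Q_j}x, \Phi_{j,x}(i))$ where $\Phi_{j,x}$ is a product of $Q_j$ permutations drawn from the finite group $G$ generated by $p_l,p_r$. The cocycle $\Phi_{j,x}$ is constant on each interval of continuity of $R^{Q_j}$, and when $a_{m_j+1}$ is large, $R^{Q_j}$ has only boundedly many (in fact $O(1)$, or at worst $O(a_{m_j})$ but on a set of total measure $\to 0$) discontinuities in a neighbourhood of which things can go wrong; away from that small-measure bad set, $R^{Q_j}x$ is within $\|Q_j\alpha\|\to 0$ of $x$. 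The remaining issue is that $\Phi_{j,x}$ need not be the identity. To fix this I would pass to the subsequence along which $\Phi_{j,x}$ is \emph{eventually constant in $j$} on a large-measure set: since $G$ is finite, for each fixed $x$ in the good set the value $\Phi_{j,x}\in G$ takes some value infinitely often; more robustly, replace $Q_j$ by $Q_j \cdot |G|!$ — wait, that does not literally work because the permutation depends on $x$ — so instead I would argue as follows. Consider $R$ acting on $[0,1[$ and its skew product by the $G$-valued cocycle; by minimality and unique ergodicity of $T$ (noted after Proposition \ref{min}) together with the finiteness of $G$, the set of return times $n$ to a neighbourhood of the identity in the natural "$G$-extension" sense has bounded gaps, and among the $Q_j$ one can extract a subsequence, or small perturbations $Q_j + r_j$ with $r_j$ bounded, for which the cocycle is trivial on a set of measure $\to 1$. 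Then $\mu(T^{Q_j+r_j}A \,\Delta\, A)\to 0$ for every measurable $A$, which is rigidity.

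The main obstacle is exactly the control of the $G$-valued cocycle $\Phi_{j,x}$: rigidity of the base rotation is classical and easy, but one must ensure the permutation part returns close to the identity simultaneously. I expect the clean way to handle this is to work symbolically: a long bispecial word $w_{n_j}$ of $L(R)$ pulls back under $\phi$ to $d$ disjoint "towers" in $L(T)$ (the $d$ homologous words of Lemma \ref{hom}), and the two return words $M_{n_j}, P_{n_j}$ of $w_{n_j}$ pull back to words over the $T$-alphabet whose associated permutations lie in $G$; stacking $a_{m_j+1}$ copies of the short return word builds a Rokhlin tower of height $\asymp Q_j$ over a base of measure $\to 0$, and the cocycle over this tower is a fixed power of a single element $g_j\in G$, so replacing $Q_j$ by the multiple of $Q_j$ corresponding to $\mathrm{ord}(g_j)\mid |G|$ copies kills it. This is the argument I would write out, and it is the only non-routine point; everything else is bookkeeping with the recursion of Proposition \ref{sdr} and the ergodic/Rokhlin-tower formalism already set up in Section \ref{sec:def}.
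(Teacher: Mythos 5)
Your final symbolic argument is essentially the paper's own proof: when a partial quotient $a_n$ is large, the trajectories of $T$ are mostly covered by blocks projecting by $\phi$ onto $P_{b_n}^{a_n}$ (or the analogue with $M$), the permutation carried by one copy of the short return word is a fixed element of a finite group, and multiplying the period $|P_{b_n}|$ by its bounded order (the paper takes the lcm $s_n\leq d^d$ of the cycle lengths of the $P_{n,i}$) yields a rigidity sequence with error of order $\frac{1}{a_n}$ plus $\frac{L}{|P_{b_n}|}$ on cylinders of length $L$. The intermediate skew-product/``bounded gaps''/perturbation-by-$r_j$ digression is unjustified as stated, but since you discard it in favour of the symbolic tower construction, the proposal is correct and follows the same route as the paper.
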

\begin{proof}
For all $n$, the trajectories of the rotation are covered by disjoint occurrences of $M_n$ and $P_n$ (of Proposition \ref{sdr}) as these are the return words of $w_n$.
Suppose for example $l_m>r_m$ for $b_n\leq m \leq b_n+a_n-1$; then because of the previous step $\ab P_{b_n} \ab>\ab M_{b_n}\ab$; then $P_{b_n+a_n}=P_{b_n}$, $M_{b_n+a_n}=M_{b_n}P_{b_n}^{a_n}$,
$P_{b_n+a_n+1}=P_{b_n}M_{b_n}P_{b_n}^{a_n}$, $M_{b_n+a_n+1}=M_{b_n}P_{b_n}^{a_n}$. Hence disjoint occurences of the word $P_{b_n}^{a_n}$ fill a proportion at least $\frac{a_n}{a_n+2}$ of the length of both
$M_{b_n+a_n}$ and $P_{b_n+a_n}$. The trajectories for $T$ are covered by the $d$ words $P_{n,i}$ and $M_{n,i}$ which project on $P_n$ and $M_n$ by $\phi$, and a proportion at least $\frac{a_n}{a_n+2}$ of them are covered by  disjoint occurrences of the $d$ words which project by $\phi$ on $P_{b_n}^{a_n}$. Each $P_{n,i}$ can be followed by exactly one $P_{n,j}$, and thus the $P_{n,i}$, $1\leq i\leq d$, are grouped into $d'_n\leq d$ cycles
$P_{n,i_{n,j,1}}\ldots P_{n,i_{n,j,c_{n,j}}}$, $1\leq j\leq d'_n$, $1 \leq c_{n,j}\leq d$, where for a given $n$ all the possible $P_{n,i_{n,j,l}}$ are different and the only $P_{n,h}$ which can follow
$P_{n,i_{n,j,c_{n,j}}}$ is $P_{n,i_{n,j,1}}$. Let $s_n\leq d^d$ the least common multiple of all the $c_{b_n,j}$, $1\leq j\leq d'_{b_n}$,; then if we move by $T^{s_n\ab P_{b_n}\ab}$ inside one of the words which project on
$P_{b_n}^{a_n}$, we see the same letter. Thus, if $E$ is a fixed cylinder of length $L$, $\mu (E\Delta T^{s_n\ab P_{b_n}\ab}E)$ is at most $\frac{2}{a_n}+\frac{s_n}{a_n}+\frac{L}{\ab P_{b_n}\ab}$. Thus, possibly replacing $P$ by $M$ for the cases $l_m<r_m$, we get thet  if the $a_n$ are unbounded $T$ is rigid, as the cylinders for the natural coding generate the whole $\sigma$-algebra. 
\end{proof}

\section{Proof of Theorem \ref{thm:flow} and Theorem \ref{thm:rank}} \label{sec:consequences}
\subsection{Rigidity of the flow}
\begin{proposition}
Let $X$ be a square-tiled surface and $\theta$ a direction, $S_t$ the linear flow in direction $\theta$ and
$T = T_\alpha$ the associated interval exchange transformation. 
The flow $S_t$ is rigid whenever $T$ is rigid.
\end{proposition}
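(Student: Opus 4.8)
The plan is to realize the interval exchange $T$ as a genuine cross-section of the flow $S_t$ and transfer rigidity through this identification. Recall from Section~\ref{sec:square-tiled-interval exchange} that $T=T_\alpha$ is the first return map of $S_t$ to the union $\Sigma$ of the diagonals of slope $-1$ of the $d$ squares, with the diagonal length normalized to $1$. Since $\alpha$ is irrational and $T$ is minimal (Proposition~\ref{min}), the return time function $\rho\colon\Sigma\to\GR_{>0}$ takes only finitely many values (indeed it is piecewise constant, locally constant off the discontinuities of $T$), so it is bounded above and below: $0<\rho_{\min}\le\rho\le\rho_{\max}$. The flow $(X,S_t)$ is then measurably isomorphic to the special flow built over $(\Sigma,T,\mu)$ under the roof $\rho$, with invariant measure $\mu\otimes dt$ normalized.

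First I would fix a rigidity sequence $q_n\to\infty$ for $T$, so $\mu(T^{q_n}A\,\triangle\,A)\to 0$ for every measurable $A\subset\Sigma$. The natural candidate rigidity times for the flow are $t_n=q_n\int_\Sigma\rho\,d\mu$, the "average height of $q_n$ returns." Writing $S_{q_n}(x):=\sum_{j=0}^{q_n-1}\rho(T^j x)$ for the time of the $q_n$-th return starting from $x\in\Sigma$, the key point is that along a rigidity sequence the Birkhoff sums $S_{q_n}$ concentrate: by the ergodic theorem $\frac1{q_n}S_{q_n}\to\int\rho\,d\mu$ in $L^1(\mu)$ is not automatic for a \emph{subsequence} of times, but here one exploits that $T$ is a $d$-point extension of a rotation $R$ of angle $\alpha$, and $\rho$ depends (through the coding) only on the $R$-coordinate in a controlled way; combined with the explicit rigidity times produced in Proposition~\ref{rig2} — which are of the form $s_n\ab P_{b_n}\ab$ and correspond to \emph{towers} of the rotation — one sees that for the point $x$ the quantity $S_{q_n}(x)-t_n$ stays bounded, in fact $\to 0$ after a further harmless adjustment of $t_n$. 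Then for a flow-box $B=A\times[a,b)\subset X$ with $A\subset\Sigma$, $S_{t_n}B$ is, up to a set of measure $O(\mu(T^{q_n}A\triangle A))+O(\sup_x|S_{q_n}(x)-t_n|)$, equal to $(T^{q_n}A)\times[a,b)$, whence $\mu\otimes dt\,(S_{t_n}B\,\triangle\,B)\to 0$. Since such flow-boxes generate the Borel $\sigma$-algebra of $X$, the flow is rigid.

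The main obstacle is the control of the Birkhoff sums $S_{q_n}(x)$ of the roof function along the rigidity sequence: a priori a rigidity sequence for $T$ need not make $\frac1{q_n}S_{q_n}$ converge, and even small drift accumulating linearly in $q_n$ would be fatal. I expect to circumvent this not by using an \emph{arbitrary} rigidity sequence but by re-examining the proof of Proposition~\ref{rig2}: there the rigidity times are synchronized with the Rokhlin-tower structure coming from the return words $M_n,P_n$ of the Sturmian system, and $\rho$ is constant on each level of those towers except near the boundary, so $S_{q_n}(x)=t_n+O(1)$ automatically, and after recentering $t_n$ one gets $o(1)$. A cleaner alternative, which I would mention as a remark, is to note that $S_{q_n}$ differs from $q_n\int\rho\,d\mu$ by a coboundary-plus-error controlled by the same tower estimates, so the argument is really: rigidity of $T$ via towers $\Rightarrow$ rigidity of the special flow via the \emph{same} towers thickened vertically. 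The rest — approximating arbitrary measurable sets by finite unions of flow-boxes, and bounding the symmetric difference — is routine.
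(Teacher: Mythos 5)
There is a genuine gap, and it comes from missing the one geometric fact that makes this proposition easy: the return time of the flow to the union $\Sigma$ of the diagonals is \emph{constant}, not merely piecewise constant with finitely many values. All the diagonals are parallel segments of slope $-1$ through vertices of the unit squares (in the cover of the torus $\GR^2/\GZ^2$ they lie on the lines $x+y\in\GZ$), and the flow moves in the fixed direction $\theta$, so along an orbit the quantity $x+y$ increases at the constant rate $\cos\theta+\sin\theta$ and every point of $\Sigma$ returns to $\Sigma$ after exactly the same time $\rho$. Hence $S_t$ is a suspension over $T$ with \emph{constant} roof, $S_{\rho q_n}$ acts on flow-boxes exactly as $T^{q_n}$ acts on their bases, and $\rho q_n$ is a rigidity sequence for the flow; no analysis of Birkhoff sums is needed. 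This is precisely the paper's short proof.

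By treating $\rho$ as genuinely non-constant, you are forced to control the return-time sums $S_{q_n}(x)=\sum_{j<q_n}\rho(T^jx)$ along the rigidity sequence, and that is exactly the step you do not prove: ``one sees that $S_{q_n}(x)-t_n$ stays bounded, in fact $\to 0$ after a further harmless adjustment'' is asserted, not established, and it is not a harmless point --- for special flows with non-constant roof, rigidity of the base does not in general pass to the flow, the deviation of these sums being precisely the obstruction, so a proof along your lines would have to do real work there. Moreover, your fallback of synchronizing with the explicit towers of Proposition \ref{rig2} quietly replaces ``any rigidity sequence of $T$'' by the particular sequences built there, so it would prove the proposition only through the characterization of Theorem \ref{thm:main} rather than as stated. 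All of these difficulties evaporate once you observe that the roof is constant.
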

\begin{proof}

 The key point is that the flow $S_t$ is a suspension flow over $T$ with {\it constant} roof function. Denote by $I$ the union of the diagonals of slope $-1$. In fact, if a point belongs to $I$, the return time $\rho$ to $I$ is  independent of the point since diagonals are parallel (see for instance Figure  2).

 Now, suppose $T$ is rigid; if $q_n$ is a rigidity sequence for $T$, then $\rho q_n$ is a rigidity sequence
 for the flow $S_t$, 
 and thus $S_t$ is rigid.
 
 Suppose the flow is rigid, with rigidity sequence $Q_n$; let $Q_n = \rho Q'_n$. Denote by $q_n$ the nearest integer to $Q_n'$. Since the return time $\rho$ is constant,  $Q_n'$ is close to the integer $q_n$: looking at the projection in the torus $\GR^2/\GZ^2$, a point in $I$ cannot be close to $I$ otherwise. Thus, as $Q_n$ is a rigidity time for the flow, $q_n$ is a rigidity time for $T$.
 
 \end{proof}

\subsection{Rank}
We now prove Theorem \ref{thm:rank}.
\begin{proof}
If $T$ is of rank one, its natural coding satisfies the non-constructive symbolic definition of rank one, see the survey \cite{fr1}: for every  positive
$\epsilon$, for every natural integer $l$, there exists a word $B$ of
length $\ab B\ab$ bigger or equal to $l$ such
that, for all $n$ large enough, on a subset of $X$ of measure
 at least
$1-\epsilon $, the prefixes of length $n$ of the trajectories are of the form
$\delta_1B_{1}...\delta_pB_{p}\delta_{p+1}$, with
$\ab \delta_1\ab +...\ab \delta_p \ab<\epsilon n$ and $\overline{d}(W_i,B)
< \epsilon$ for all $i$. But then 
Proposition \ref{lmr} is valid for $e=1$ and implies, possibly after shortening $B$ by a prefix and a suffix of total relative length at most $\epsilon$, and  lengthening the $\delta_i$ accordingly, that the same is true with $B_i=B$ fo all $i$. By projecting by $\phi$, we get a similar structure for the trajectories of the rotation $R$. Such a structure for $R$ implies  that the quantity $F$ defined in Definition 4 of \cite{che}   is equal to $1$, and by Proposition 5 of that paper this  is impossible when $\alpha$ has bounded partial quotients.
\end{proof}

\section{Interval exchange transformations associated to billiards in Veech triangles} \label{sec:reg-billiards}
We consider the famous examples of \cite{ve89}: unfolding the billiard in the right-angled triangle with angles $(\pi/n, \pi/2, (d-1)\pi/2d)$, one gets a regular double $2d$-gon. A path, which starts in the interior of the polygon, moves with constant velocity until it hits the boundary, then it re-enters the polygon at the corresponding point of the parallel side, and continues travelling with the same velocity.

 We follow the presentation of \cite{su}.
The sides of the $2d$-gon are labelled $A_1$, ..., $A_d$ from top to bottom on the right, and two parallel sides have the same label. We draw 
the diagonal from the right end of the side labelled $A_i$ on the right to the left end of the side labelled $A_i$ on the left. There always exists $i$ such that the angle $\theta$ between the billiard direction and the orthogonal of this diagonal is between $\frac{-\pi}{2d}$ and $\frac{\pi}{2d}$ (see Figure 3) .

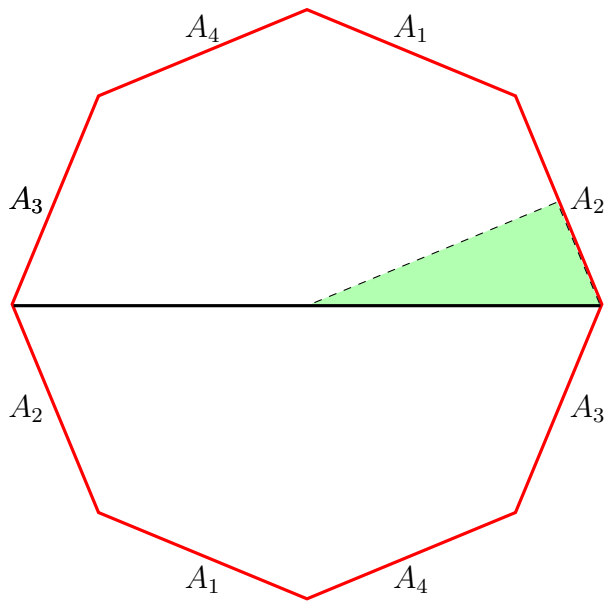
\begin{figure}[h!] \label{fig:octagon}
\begin{center}
\begin{tikzpicture}[scale=1.5]
\draw(0,0)--++(22.5:1) node[below]{$A_4$};
\draw (0,0)--++(22.5:2)--++(67.5:1) node[right]{$A_3$};
\draw(0,0)--++(22.5:2)--++(67.5:2)--++(112.5:1) node[right]{$A_2$};
\draw (0,0)--++(22.5:2)--++(67.5:2)--++(112.5:2)--++(157.5:1) node[above]{$A_1$};
\draw (0,0)--++(22.5:2)--++(67.5:2)--++(112.5:2)--++(157.5:2)--++(202.5:1)node[above]{$A_4$};
\draw (0,0)--++(22.5:2)--++(67.5:2)--++(112.5:2)--++(157.5:2)--++(202.5:2)--++(247.5:1)node[left]{$A_3$};
\draw (0,0)--++(22.5:2)--++(67.5:2)--++(112.5:2)--++(157.5:2)--++(202.5:2)--++(247.5:2)--++(292.5:1)node[left]{$A_2$};
\draw (0,0)--++(22.5:2)--++(67.5:2)--++(112.5:2)--++(157.5:2)--++(202.5:2)--++(247.5:1)node[left]{$A_3$};
\draw (0,0)--++(22.5:2)--++(67.5:2)--++(112.5:2)--++(157.5:2)--++(202.5:2)--++(247.5:2)--++(292.5:2)--++(337.5:1)node[below]{$A_1$};
\draw[very thick, red] (0,0)--++(22.5:2)--++(67.5:2)--++(112.5:2)--++(157.5:2)--++(202.5:2)--++(247.5:2)--++(292.5:2)--++(337.5:2)--cycle;
\draw[dashed, fill=green!30!white](2.6,2.6)--(0,2.6)--++(22.5:2.4)--cycle;
\draw[very thick](-2.6,2.6)--(2.6,2.6);
\end{tikzpicture}
\caption{ Regular Octagon}
\end{center}
\end{figure}

 We put on the circle the points $-ie^{\frac{ij\pi}{d}}$ from $j=0$ to $j=d$, which are the vertices of the $2d$-gon; our diagonal is the vertical line from $-i$ to $i$, we project on it the sides of the polygon which are to the right of the diagonal, partitioning it into intervals $I_1$, ... $I_d$, and the sides of the polygon which are to the left of the diagonal, partitioning it into intervals $J_1$, ... $J_d$. The transformation which exchanges the intervals $(I_1,... I_d)$ with the $(J_1,... J_d)$ is identified with the interval exchange transformation ${\mathcal I}$ on $[-1,1[$ whose discontinuities are $\gamma_j=-\cos\frac{j\pi}{d}+\tan\theta\sin\frac{j\pi}{d}$, $1\leq j\leq d-1$, while the discontinuities of ${\mathcal I}^{-1}$ are $\beta_j=-\gamma_{d-j}$, composed with the map $x\to-x$ if $\theta<0$. ${\mathcal I}$ is a  $d$-interval exchange transformation with permutation $p$ defined by $p(j)=d-j+1$ (see Figure 4).

 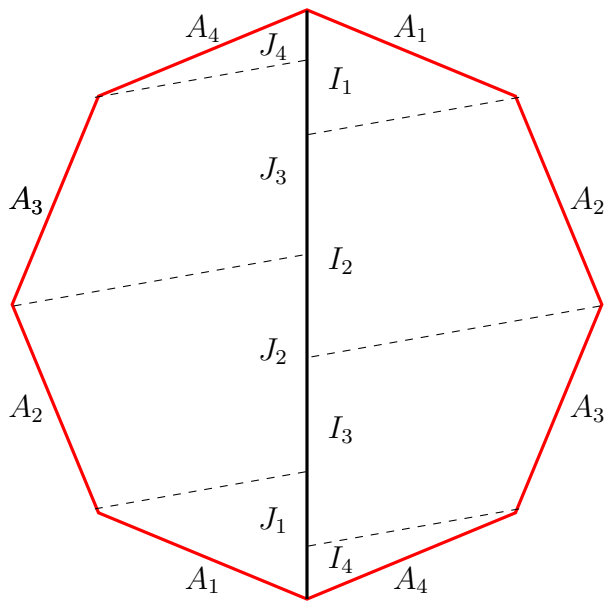
\begin{figure}[h!] \label{fig:interval exchange-octagon}
\begin{center}
\begin{tikzpicture}[scale=1.5]
\draw(0,0)--++(22.5:1) node[below]{$A_4$};
\draw (0,0)--++(22.5:2)--++(67.5:1) node[right]{$A_3$};
\draw(0,0)--++(22.5:2)--++(67.5:2)--++(112.5:1) node[right]{$A_2$};
\draw (0,0)--++(22.5:2)--++(67.5:2)--++(112.5:2)--++(157.5:1) node[above]{$A_1$};
\draw (0,0)--++(22.5:2)--++(67.5:2)--++(112.5:2)--++(157.5:2)--++(202.5:1)node[above]{$A_4$};
\draw (0,0)--++(22.5:2)--++(67.5:2)--++(112.5:2)--++(157.5:2)--++(202.5:2)--++(247.5:1)node[left]{$A_3$};
\draw (0,0)--++(22.5:2)--++(67.5:2)--++(112.5:2)--++(157.5:2)--++(202.5:2)--++(247.5:2)--++(292.5:1)node[left]{$A_2$};
\draw (0,0)--++(22.5:2)--++(67.5:2)--++(112.5:2)--++(157.5:2)--++(202.5:2)--++(247.5:1)node[left]{$A_3$};
\draw (0,0)--++(22.5:2)--++(67.5:2)--++(112.5:2)--++(157.5:2)--++(202.5:2)--++(247.5:2)--++(292.5:2)--++(337.5:1)node[below]{$A_1$};
\draw[very thick, red] (0,0)--++(22.5:2)--++(67.5:2)--++(112.5:2)--++(157.5:2)--++(202.5:2)--++(247.5:2)--++(292.5:2)--++(337.5:2)--cycle;

\draw[very thick](0,0)--(0,5.22);

\draw[dashed](1.88,0.8)--++(10:-1.95);
\draw[dashed](2.6,2.6)--++(10:-2.6);
\draw[dashed](1.88,4.45)--++(10:-1.95);
\draw[dashed](-1.88,4.45)--++(10:1.95);
\draw[dashed](-2.6,2.6)--++(10:2.6);
\draw[dashed](-1.88,0.8)--++(10:1.95);

\draw (0.3,0.35) node{$I_4$};
\draw (0.3,1.5) node{$I_3$};
\draw (0.3,3) node{$I_2$};
\draw (0.3,4.6) node{$I_1$};

\draw (-0.3,4.9) node{$J_4$};
\draw (-0.3,3.8) node{$J_3$};
\draw (-0.3,2.2) node{$J_2$};
\draw (-0.3,0.7) node{$J_1$};

\end{tikzpicture}
\caption{Interval exchange transformation in the regular octagon}

\end{center}
\end{figure}

Thus we consider the one-parameter family of interval exchange transformations $\mathcal I$, which depend on the parameter $\theta$, $\frac{-\pi}{2d}<\theta <\frac{\pi}{2d}$ or equivalently on the parameter 
$$y= \frac{1}{2}\left(\frac{\sin\frac{\pi}{d}}{|\tan\theta|}-\left(1+\cos\frac{\pi}{d}\right)\right)>0.$$

\subsection{A rigid subfamily of interval exchange transformations}

Let $\lambda=2\cos^2\frac{\pi}{2d}=1+\cos\frac{\pi}{d}$. We define an application $g$ by $g(y)=y-\lambda$ if $y>\lambda$,
$g(y)=\frac{y}{1-2y}$ if $0<y<\frac{1}{2}$ (the value of $g$ on other sets is irrelevant).

From Theorem 11 of \cite{ngon}, in the particular case of Theorem 13 of the same paper, we deduce the following result.
\begin{proposition}\label{rigng}
If $y$ is such that there exist two sequences $m_n$ and $q_n$, with $m_0=q_0=0$, and the iterates $g^{(n)}(y)$ satisfy 
\begin{itemize}
\item $\lambda< g^{(n)}(y)$ if $m_0+q_0+m_1+q_1+\ldots +m_k+q_k\leq n\leq m_0+q_0+m_1+q_1+\ldots +m_k+q_k+m_{k+1}-1$ for some $k$,
\item $0< g^{(n)}(y)\leq \frac{1}{2}$ if $m_0+q_0+m_1+q_1+\ldots +m_k+q_k+m_{k+1}\leq n \leq m_0+q_0+m_1+q_1+\ldots +m_k+q_k+m_{k+1}+q_{k+1}-1$ for some $k$,
\end{itemize}
then for all $n$, the trajectories of $\mathcal I$ are covered by disjoint occurrences of words $M_{n,i}$ and $P_{n,i}$, $1\leq i \leq d-1$,
built inductively  in the following way:
\begin{itemize}
\item $M_{0,i}=i$, $1\leq i\leq d-1$, $P_{0,1}=d1$, $P_{0,i}=i$, $2\leq i\leq d-1$;
\item if $m_0+q_0+m_1+q_1+\ldots +m_k+q_k\leq n\leq m_0+q_0+m_1+q_1+\ldots +m_k+q_k+m_{k+1}-1$ for some $k$,
$$P_{n+1,i}=P_{n,i} \quad\mbox{for}\quad  1\leq i\leq d-1,$$
$$M_{n+1,i}=M_{n,i}P_{n,d-i+1}P_{n,i}\quad\mbox{for}\quad  2\leq i\leq d,$$
$$M_{n+1, 1}=M_{n,1}P_{n,1};$$
\item if $m_0+q_0+m_1+q_1+\ldots +m_k+q_k+m_{k+1}\leq n \leq m_0+q_0+m_1+q_1+\ldots +m_k+q_k+m_{k+1}+q_{k+1}-1$ for some $k$,
$$M_{n+1,i}=M_{n,i} \quad\mbox{for}\quad  1\leq i \leq d-1,$$
$$P_{n+1,i}=P_{n,i}M_{n+1,d-i}M_{n+1,i} \quad\mbox{for}\quad 1 \leq i \leq d-1.$$

\end{itemize}
\end{proposition}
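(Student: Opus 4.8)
The plan is not to reconstruct the self-dual induction but to quote it: Proposition \ref{rigng} is obtained by specialising Theorem 11 of \cite{ngon} to the one-parameter family $\mathcal I=\mathcal I_\theta$ and then carrying out the arithmetic dictionary between the renormalisation of the interval-exchange data and the map $g$. So the first step is to recall the content of Theorem 11 of \cite{ngon}: for the symmetric $d$-interval exchange with permutation $p(j)=d-j+1$, one constructs inductively words $M_{n,i}$, $P_{n,i}$ ($1\le i\le d-1$), which at every stage are return words of the bispecial words at level $n$, together with length data; passing from stage $n$ to stage $n+1$ amounts to performing one elementary step chosen from a short list, the choice being dictated by the position of one renormalised length parameter, and each elementary step acts on the words by an explicit substitution. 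I would then identify this renormalised parameter, in the coordinates used here, with $g^{(n)}(y)$, where $y$ is the quantity displayed just before the proposition, the cut value being $\lambda=1+\cos\frac{\pi}{d}=2\cos^{2}\frac{\pi}{2d}$.

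The second step is the bookkeeping that turns the hypothesis into a prescribed sequence of elementary steps. On a block of indices $n$ where $g^{(n)}(y)>\lambda$, the renormalisation acts by $y\mapsto y-\lambda$, which is exactly the elementary step that leaves all $P_{n,i}$ unchanged and replaces $M_{n,i}$ by $M_{n,i}P_{n,d-i+1}P_{n,i}$ (and $M_{n,1}$ by $M_{n,1}P_{n,1}$); a maximal such block has length $m_{k+1}$, the number of copies of $\lambda$ one may remove before falling into $(0,\tfrac12]$. On a block where $g^{(n)}(y)\in(0,\tfrac12]$, writing $z=1/g^{(n)}(y)\ge 2$ the renormalisation is $z\mapsto z-2$, i.e. $y\mapsto\frac{y}{1-2y}$, which is the dual elementary step leaving all $M_{n,i}$ unchanged and replacing $P_{n,i}$ by $P_{n,i}M_{n+1,d-i}M_{n+1,i}$; a maximal such block has length $q_{k+1}$. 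The hypothesis says precisely that the orbit of $y$ under $g$ visits these two regimes in the alternating block pattern governed by $(m_k)$ and $(q_k)$ — in particular it never lands in the ``irrelevant'' region $(\tfrac12,\lambda]$ — so Theorem 11 applies in the sub-case of Theorem 13, and iterating the two substitutions along this step sequence, starting from the initial data $M_{0,i}=i$, $P_{0,1}=d1$, $P_{0,i}=i$, produces exactly the recursion in the statement.

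The third step is the covering assertion, which is a structural feature of the induction independent of which steps were taken: since at each stage the $M_{n,i}$ and $P_{n,i}$ are return words of the bispecial words at level $n$, every trajectory of $\mathcal I$ decomposes into disjoint consecutive occurrences of these $2(d-1)$ words; this is part of the conclusion of Theorem 11 of \cite{ngon}, so nothing further is needed. I expect the only genuine difficulty to lie in the faithful dictionary of the first two steps: one must check that the affine change of variable linking $\tan\theta$ to $y$ really conjugates the length renormalisation of \cite{ngon} to $g$ with cut value $\lambda$, that the two elementary steps singled out by $\{g^{(n)}(y)>\lambda\}$ and $\{0<g^{(n)}(y)\le\tfrac12\}$ are the ones carrying the stated word substitutions and no others, and that under the hypothesis the induction remains inside the sub-case treated in Theorem 13 (so that no further branch of the general induction is ever used). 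Once that dictionary is pinned down, the rest is a mechanical unwinding of the recursion, and one could alternatively dispense with the citation by proving the boxed recursion directly by induction on $n$, maintaining as invariant that the $M_{n,i}$, $P_{n,i}$ are the return words of the level-$n$ bispecial words and that the associated length datum is $g^{(n)}(y)$.
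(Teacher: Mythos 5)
Your proposal takes essentially the same route as the paper, which offers no argument beyond the sentence preceding the proposition (``From Theorem 11 of \cite{ngon}, in the particular case of Theorem 13 of the same paper, we deduce the following result''): the statement is obtained by citing the self-dual induction of \cite{ngon} and specialising it to this one-parameter family. Your extra bookkeeping --- matching the branch $y\mapsto y-\lambda$ of $g$ with the steps that extend the $M_{n,i}$, the branch $y\mapsto \frac{y}{1-2y}$ with the steps that extend the $P_{n,i}$, and deducing the covering assertion from the return-word/tower structure of the induction --- is a correct unpacking of what that citation is meant to deliver.
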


We can now state

\begin{proposition} \label{prop:interval exchange-billiard} There exists two functions $F$ and $G$ such that, if for infinitely many $n$ either $m_n>F(m_0, q_0, m_1,q_1, \ldots, m_{n-1}, q_{n-1})$ or $q_n>G(m_0, q_0, m_1,q_1, \ldots, m_{n-1}, q_{n-1},m_n)$, and $y$ is as in Proposition \ref{rigng}, then $\mathcal I$ is rigid.\end{proposition}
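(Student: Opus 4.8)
The plan is to adapt the proof of Proposition \ref{rig2}: a long block of steps of one type produces, in \emph{all} the words of the construction at a slightly later level, a long almost-periodic structure, and translating by a common period of these structures gives rigidity along a subsequence. I will describe the case of a long block of $M$-steps; the case of a long block of $P$-steps is identical after exchanging the roles of $M$ and $P$.

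Put $L_n=m_0+q_0+\cdots+m_{n-1}+q_{n-1}$, and suppose $m_n$ is the length of the block of $M$-steps starting at level $L_n$. During this block the words $P_{m,i}$ do not change, and iterating the $M$-recursion of Proposition \ref{rigng} gives $M_{L_n+m_n,i}=M_{L_n,i}\,W_i^{m_n}$, where $W_i=P_{L_n,d-i+1}P_{L_n,i}$ for $i\ge 2$ and $W_1=P_{L_n,1}$. Set $p_i=|W_i|$, let $Q_n$ be the least common multiple of $p_1,\dots,p_{d-1}$, and let $B_n=\sum_{j=1}^{d-1}\bigl(|M_{L_n,j}|+|P_{L_n,j}|\bigr)$. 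The crucial remark is that $Q_n$ and $B_n$ depend \emph{only} on $m_0,q_0,\dots,m_{n-1},q_{n-1}$. Now go one step into the following block, which is a $P$-block of length $q_n\ge 1$: applying the $P$-recursion of Proposition \ref{rigng} once gives, for every $i$,
\begin{align*}
M_{L_n+m_n+1,i}&=M_{L_n,i}\,W_i^{m_n},\\
P_{L_n+m_n+1,i}&=\bigl(P_{L_n,i}\,M_{L_n,d-i}\bigr)\,W_{d-i}^{m_n}\,M_{L_n,i}\,W_i^{m_n}.
\end{align*}
Thus \emph{every} one of the $2(d-1)$ words at level $L_n+m_n+1$ is a concatenation of ``short'' pieces of total length at most $B_n$ together with one or two periodic blocks $W_j^{m_n}$, each of length at least $m_n$ and of period $p_j$ dividing $Q_n$. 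For a long $P$-block one obtains the symmetric statement, with a relevant common period $Q'_n$ and short-word bound $B'_n$ depending only on $m_0,q_0,\dots,m_{n-1},q_{n-1},m_n$. We then \emph{define} $F(m_0,q_0,\dots,m_{n-1},q_{n-1})=(B_n+Q_n)^2$ and $G(m_0,q_0,\dots,m_{n-1},q_{n-1},m_n)=(B'_n+Q'_n)^2$.

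With these $F,G$, fix $y$ as in Proposition \ref{rigng}, assume the hypothesis, and pick an infinite set $N$ of indices realising it; I treat the $n\in N$ with $m_n>F(\cdots)$. For such $n$, Proposition \ref{rigng} covers every trajectory of $\mathcal I$ by disjoint occurrences of the $2(d-1)$ words at level $L_n+m_n+1$ described above. Fix a cylinder $E$ of length $L$ of the natural coding. Since $Q_n$ is a common multiple of every $p_j$ and $m_n$ is far larger than $Q_n$, a point whose position lies inside some periodic block $W_j^{m_n}$ at distance at least $Q_n+L$ from its right end agrees with its image under $\mathcal I^{Q_n}$ on the first $L$ coding letters, hence has the same membership in $E$. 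Inside any one level-$(L_n+m_n+1)$ word the positions not of this kind — the short pieces, and the last $Q_n+L$ positions of each of the at most two periodic blocks — form a set of at most $B_n+2Q_n+2L$ positions, while the word has length at least $m_n$; summing over the tiles (each of length at least $m_n$), which partition the trajectory, and using unique ergodicity,
$$\mu\bigl(E\,\Delta\,\mathcal I^{Q_n}E\bigr)\ \le\ \frac{B_n+2Q_n+2L}{m_n}\ \le\ \frac{2}{B_n+Q_n}+\frac{2L}{(B_n+Q_n)^2},$$
where the last inequality uses $m_n\ge(B_n+Q_n)^2$. As $n$ runs through $N$ we have $L_n\to\infty$, and since (by the form of the $g$-orbit) both $M$- and $P$-steps occur infinitely often, all the words of the construction grow without bound, so $B_n\to\infty$ and the right-hand side tends to $0$; the same holds in the $P$-block cases with $Q'_n,B'_n$ in place of $Q_n,B_n$. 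Since the cylinders generate the whole $\sigma$-algebra, the interleaving of the sequences $(Q_n)$ and $(Q'_n)$ over $n\in N$ is a rigidity sequence, and $\mathcal I$ is rigid.

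The only genuinely delicate point is establishing the two displayed decompositions at level $L_n+m_n+1$ precisely, i.e.\ keeping track of exactly which words of Proposition \ref{rigng} survive and where, and in particular checking that the quantities $Q_n,B_n$ (resp.\ $Q'_n,B'_n$) feeding the definition of $F$ (resp.\ $G$) involve only the data available before the block in question; granting this, the estimate above is entirely parallel to the corresponding one in the proof of Proposition \ref{rig2}.
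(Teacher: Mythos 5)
Your proof is correct and follows essentially the same route as the paper's: cover the trajectories by the words of Proposition \ref{rigng} one step past a long block, exploit the periodic powers $W_i^{m_n}$ (resp.\ the $M$-powers for a long $P$-block), and take the least common multiple of their periods as the rigidity time, with $F$ and $G$ chosen so that the block length dominates quantities determined by the earlier data. You simply make explicit the estimates and the choice of $F,G$ that the paper's proof leaves as a sketch ("as in Proposition \ref{rig2}\dots which gives a condition as in the hypothesis").
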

{\bf Proof}\\ If $m_n$ is large, as in Proposition \ref{rig2} we cover most of the trajectories by words $(P_{n,d-i+1}P_{n,i})^{m_n}$,$2\leq i\leq d-1$, and $P_{n,1}^{m_n}$. Let $s_n$ be the least common multiple  of $\ab P_{n,d-i+1} \ab + \ab P_{n,i}\ab$, $2\leq i\leq d-1$, and $\ab P_{n,1}\ab $; when we move by $s_n$ inside these words, we see the same letter; thus $s_n$ will give a rigidity sequence for $\mathcal I$ if all the 
$m_n(\ab P_{n,d-i+1} \ab + \ab P_{n,i}\ab)$, $2\leq i\leq d-1$, and $m_n\ab P_{n,1}\ab $ are much larger than $s_n$, which gives a condition as in the hypothesis; and similarly with the $M$ words if $q_n$ is large. \qed\\

\subsection{Rigidity of the flow}

\begin{proposition} \label{prop-flow-billiard} There exists a dense $G_\delta$ set of directions $\theta$, of positive Hausdorff dimension for which the flow is rigid.\end{proposition}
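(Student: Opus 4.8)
The plan is to pin down a set of parameters $y>0$ — equivalently of directions $\theta$, through the homeomorphism $y=\frac12\bigl(\sin\tfrac\pi d/\lvert\tan\theta\rvert-(1+\cos\tfrac\pi d)\bigr)$ — whose itinerary under the map $g$ has infinitely many ``large'' entries in the precise sense of Proposition~\ref{prop:interval exchange-billiard}, to show that this set is a dense $G_\delta$ of positive Hausdorff dimension, and to check that for every such $y$ the linear flow $S_t$ is rigid. The only point beyond Proposition~\ref{prop:interval exchange-billiard} is that, contrary to the square-tiled situation, the first-return time $\rho$ of $S_t$ to the diagonal is not constant, so rigidity of $\mathcal I$ does not immediately transfer to the flow; this is settled by a short geometric observation which is also the reason the argument extends mutatis mutandis to every Veech surface.

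\emph{Rigidity of the flow.} Suppose the itinerary of $y$ has a long stretch of $m_n$ iterates in the $\lambda$-regime. By Proposition~\ref{rigng} the trajectories of $\mathcal I$ are then covered, outside a set of relative measure $O(1/m_n)$, by disjoint occurrences of $(P_{n,d-i+1}P_{n,i})^{m_n}$, $2\le i\le d-1$, and $P_{n,1}^{m_n}$. Each maximal such block is the base of a Rokhlin tower which, up to a set of relative measure $O(1/m_n)$, fills one cylinder of $S_t$; on that cylinder the sum of $\rho$ over one period of the corresponding word equals the circumference, a constant. Taking for $\tau_n$ the analogue of the integer $s_n$ of the proof of Proposition~\ref{prop:interval exchange-billiard} multiplied by the relevant $\rho$-sums — a common near-multiple of the finitely many circumferences involved — one gets that $S_{\tau_n}$ displaces every point of a set of measure $1-O(1/m_n)$ by $O(1/m_n)$, with $\tau_n\to\infty$; letting $m_n\to\infty$ along the chosen subsequence gives rigidity. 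The case of a long $q_n$-stretch, with the $M$-words, is symmetric. Since this uses nothing but the special-flow structure over a renormalizable interval exchange and the fact that a high parabolic power drives a direction toward a periodic one, it transfers verbatim to any Veech surface.

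\emph{A dense $G_\delta$.} On each $g$-cylinder — the interval of parameters $y$ realizing a prescribed finite itinerary prefix — the itinerary map is a homeomorphism, and cylinders are open intervals; moreover a cylinder can always be prolonged by finitely many bounded entries and then by one entry exceeding a prescribed threshold. Hence the set $\mathcal R$ of $y$ whose itinerary satisfies the condition of Proposition~\ref{prop:interval exchange-billiard} for infinitely many $n$ equals $\bigcap_{N\ge 1}\bigcup_{n\ge N}\{y:\text{the $n$-th entry of the itinerary of $y$ is large}\}$, where each inner set depends only on the first $n$ entries, hence is a union of cylinders, hence open and dense; so $\mathcal R$ is a dense $G_\delta$, and so is the corresponding set of directions since $\theta\mapsto y$ is a diffeomorphism.

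\emph{Positive Hausdorff dimension, and the main obstacle.} I would exhibit a Cantor set $E\subseteq\mathcal R$ and estimate $\dim_H E$ by the mass distribution principle. Fix a finite family of at least two ``small'' valid itinerary patterns; recursively choose $n_1<n_2<\cdots$ as follows. At an index $n\notin\{n_k\}$ the itinerary is allowed to run over the small patterns, each carving a subcylinder of definite relative length, by the uniform expansion and bounded distortion of $g$ on bounded cylinders. At $n=n_k$, with all earlier data fixed, let $\overline{F}_k$ be the maximum over the finitely many surviving cylinders of the threshold of Proposition~\ref{prop:interval exchange-billiard} on that cylinder's prefix, allow the entry at $n_k$ to run over $[\overline{F}_k,2\overline{F}_k)$ so it always exceeds the threshold, and then choose $n_{k+1}$ so large that $n_{k+1}-n_k\ge\log\overline{F}_k$. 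Every $y\in E$ has infinitely many large entries, so $E\subseteq\mathcal R$. Endowing $E$ with the measure $\nu$ splitting mass equally among children, a cylinder $C$ reached after $k$ constraints satisfies $-\log\nu(C)\asymp\sum_{j\le k}\bigl((n_j-n_{j-1})\log\kappa+\log\overline{F}_j\bigr)$ and $-\log\lvert C\rvert\asymp\sum_{j\le k}\bigl((n_j-n_{j-1})\log(1/\delta_1)+2\log\overline{F}_j\bigr)$, with $\kappa\ge 2$ the branching and $\delta_1<1$ the contraction at bounded steps and $\lvert C\rvert/\lvert C^-\rvert\asymp\overline{F}_j^{-2}$ at a large step; since $n_j-n_{j-1}\ge\log\overline{F}_j$, both sums are comparable to $\sum(n_j-n_{j-1})$, so $\log\nu(C)/\log\lvert C\rvert$ stays bounded below by a fixed positive constant, and likewise at intermediate generations by bounded distortion. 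Frostman's lemma then gives $\dim_H E>0$, hence the same for the corresponding directions, which lie inside the dense $G_\delta$ of rigid directions found above. The delicate point is precisely this: the thresholds of Proposition~\ref{prop:interval exchange-billiard} may a priori grow arbitrarily fast along a prefix, so a careless choice of the constraint times $n_k$ would destroy all Hausdorff dimension; spacing them by more than $\log\overline{F}_k$ repairs this, and is legitimate because $\overline{F}_k$ depends only on the already-constructed data before time $n_k$.
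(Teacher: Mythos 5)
Your route --- deducing rigidity of the flow from rigidity of $\mathcal I$ via Propositions \ref{rigng} and \ref{prop:interval exchange-billiard}, then getting density and dimension from a symbolic construction for the map $g$ --- has two genuine gaps. First, density fails at the outset: Proposition \ref{rigng} applies only to parameters $y$ whose entire $g$-orbit (including $n=0$) lies in $(0,\frac12]\cup(\lambda,\infty)$, so your set $\mathcal R$ avoids the whole interval $(\frac12,\lambda]$, which corresponds to an open interval of directions $\theta$; hence the inner sets in your $G_\delta$ description are not dense in the circle of directions, and no refinement of the cylinder argument can repair this. (This is exactly why the paper claims only an \emph{uncountable} set of rigid interval exchange transformations in Proposition \ref{prop:interval exchange-billiard}, and proves the dense $G_\delta$ statement for the flow by a different argument.) Second, the passage from $\mathcal I$ to the flow is precisely where the difficulty sits, and your ``short geometric observation'' does not settle it: the roof function is only piecewise constant and the cylinder circumferences are rationally independent (their ratio is $\sqrt 2$ already for the octagon), so a time $\tau_n$ that is simultaneously a near-multiple of the $d-2$ circumferences exists only through simultaneous Diophantine approximation, with an error controlled by the size of $\tau_n$ (Dirichlet gives exponent $1+\frac{1}{d-3}$), not by $m_n$; whether such a $\tau_n$ can be taken much smaller than the tower heights while the displacement tends to $0$ is a quantitative balance you never address --- you simply assert a displacement $O(1/m_n)$. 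Making this work would impose growth conditions on $m_n,q_n$ beyond those of Proposition \ref{prop:interval exchange-billiard}. (A lesser issue: the branch $y\mapsto y-\lambda$ of $g$ has derivative $1$, so the ``uniform expansion and bounded distortion'' estimates in your Cantor construction are not correct as stated, though that part looks repairable.)

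For comparison, the paper's proof bypasses $\mathcal I$ entirely and works with the flow: it takes directions $\theta$ approximable by periodic directions $\theta_n$ at speed $|\theta-\theta_n|<l_n^{-(2+a)}$, where $l_n$ is the shortest cylinder circumference in direction $\theta_n$, and uses rigidity times $p_nl_n$ with $p_n$ given by simultaneous rational approximation of the ratios of the cylinder lengths; the measure escaping the cylinders is at most $p_nl_n^2|\theta-\theta_n|$, and the displacement inside each cylinder is small provided $l_n\ll p_n^{1/(d-3)}$ and $p_n\ll l_n^{a}$, which are compatible when $a>d-3$. Density (indeed a dense $G_\delta$) and positive Hausdorff dimension then come from the fact that periodic directions are the cusps of a lattice in $\textrm{SL}(2,\GR)$, a step for which your symbolic substitute cannot work because of the excluded parameter interval noted above.
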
 

\begin{proof}

We recall that in every non minimal direction, the linear flow is periodic (see \cite{ve89}). In a periodic direction, the surface is decomposed into parallel cylinders of commensurable moduli. Up to normaliization, the vectors of the heights of the cylinders form a finite set. More precisely, the periodic directions correspond to cusps of a lattice in $\textrm{SL}(2,\GR)$ (see \cite{ve89}).

We give a detailed proof in the case $d=4$  since one can make explicit computations. We recall that in a periodic direction, the octagon is decomposed into cylinders. The ratio of the lengths of these cylinders is $\sqrt{2}$. 

Let us fix a direction $\theta$.
 We 	approximate $\theta$ by periodic directions $\theta_n$. We denote by  $l_n$ the length of the shortest cylinder in direction $\theta_n$.
 We say that $\theta$ is approximable by $(\theta_n)$ at speed $a$ if 
  $\ab \theta -\theta_n \ab<\frac{1}{l_n^{2+a}}$. Assume that this property holds. Denote by $C_{1,n}$ the cylinder of length $l_n$ and $C_{2,n}$ the cylinder of length $l_n\sqrt{2}$. We approximate $\sqrt{2}$ by $\frac{p_n}{q_n}$, with $\ab \sqrt{2}-\frac{p_n}{q_n}\ab <\frac{1}{q_n^2}$.

Our rigidity sequence will be $p_nl_n$. As in Figure 5,  flowing in direction $\theta$, the subinterval $B$ of the interval $J$ of the cylinder $C_{1,n}$ that escapes the cylinder $C_{1,n}$ after time $l_n$ has length  $l_n\ab \theta-\theta_n\ab $. Thus the area of the sub rectangle that does not run along the  cylinder has measure $l_n^2\ab \theta-\theta_n\ab $.
After time $p_nl_n$, the part that escapes has measure
$p_nl_n^2\ab \theta-\theta_n\ab <\frac{p_n}{l_n^{a}}$. This measure tends to zero as $n$ tends to infinity if $p_n<<l_n^a$.

When we move by the time $p_nl_n$ of the flow inside $C_{1,n}$, there is no vertical translation by construction; inside $C_{2,n}$, we move by  $p_nl_n$ modulo $l_n\sqrt{2}$; but $p_nl_n=l_n(q_n\sqrt{2}+\frac{x_n}{q_n})$ with $\vert x_n \vert<1$, so we move by less than $\frac{l_n}{q_n}$. Thus rigidity holds if $l_n<<q_n$ or equivalently $l_n<<p_n$.

Our two conditions $l_n<<p_n<<l_n^a$ are compatible if $a>1$.
Moreover, since the periodic directions correspond to the cusps of a lattice in $\textrm{SL}(2,\GR)$, the set of $\theta$ approximable at speed $a$ has positive Hausdorff dimension and is a dense $G_\delta$ set of the unit circle. Nevertheless it has 0 measure.

For general $d$,  we have $d-2$ cylinders $C_{n,i}$ of lengths $l_n\tau_i$ with $\tau_1=1$. By Dirichlet, we find  $p_n$ and $q_{n,i}$, such that
$\ab \frac{1}{\tau_i}-\frac{q_{n,i}}{p_n}\ab <\frac{1}{p_n^{1+b}}$ for all $i>1$ where $b = \frac{1}{d-3}$. Thus $p_nl_n=l_n(q_{n,i}\tau_i+\frac{x_{n,i}\tau_i}{p_n^b})$ with $\vert x_{n,i} \vert <1$, thus $p_nl_n$ is a rigidity sequence if both $p_n<<l_n^a$
and $l_n<<p_n^b$ which is possible if $ab>1$ which means that $a > d-3$. 
\end{proof}

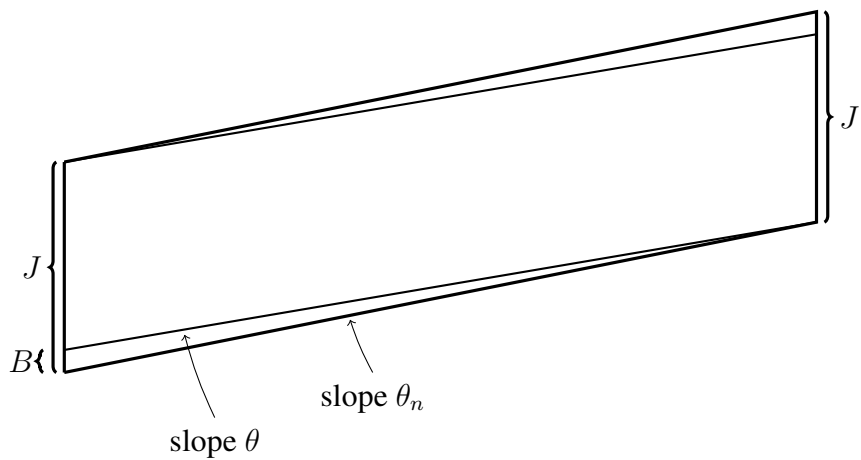
\begin{figure}[ht]
\centering
\begin{tikzpicture} [scale=2]
  \draw [very thick] (0,-0) -- (0,1.4);
  \draw [very thick] (0,0) -- (5,1) -- (5,2.4) -- (0,1.4);
  \draw [thick] (0,1.4) -- (5,2.25);
  \draw [thick] (0,0.15) -- (5,1);
  \draw [very thick,snake=brace,raise snake=3pt]
    (0,0) -- node [left=4pt] {$J$} (0,1.4);
  \draw [very thick,snake=brace,raise snake=8pt]
    (0,0) -- node [left=7pt] {$B$} (0,0.15);
  \draw [thin,->] (2.05,0) node [below] {slope $\theta_n$} to
    [bend left=5] (1.9,0.35);
  \draw [thin,->] (1,-0.3) node [below] {slope $\theta$} to
    [bend left=5] (0.8,0.25);
  \draw [very thick,snake=brace,raise snake=3pt]
    (5,2.4) -- node [right=4pt] {$J$} (5,1);
\end{tikzpicture}
\caption{ 
Trajectories in direction $\theta$ run along the cylinder from $J$ in
direction $\theta_n$ once unless they are in the subinterval $B$.}
\label{fig:cylinder}
\end{figure}

\end{document}